\RequirePackage{plautopatch}
\documentclass{amsart}

\usepackage{amsmath,amssymb,amsthm,mathtools,mathrsfs}
\usepackage{iftex}
\ifPDFTeX
  \usepackage[hidelinks]{hyperref}
\else\ifLuaTeX
  \usepackage[hidelinks]{hyperref}
\else\ifXeTeX
  \usepackage[hidelinks]{hyperref}
\else
  \usepackage[dvipdfmx,hidelinks]{hyperref}
\fi\fi\fi
\usepackage{cleveref}

\numberwithin{equation}{section}

\theoremstyle{plain}
\newtheorem{theorem}{Theorem}[section]
\newtheorem{proposition}[theorem]{Proposition}

\newtheorem{corollary}[theorem]{Corollary}
\theoremstyle{definition}
\newtheorem{definition}[theorem]{Definition}
\theoremstyle{remark}
\newtheorem{remark}[theorem]{Remark}

\crefname{theorem}{theorem}{theorems}
\crefname{proposition}{proposition}{propositions}
\crefname{lemma}{lemma}{lemmas}
\crefname{corollary}{corollary}{corollaries}
\crefname{definition}{definition}{definitions}
\crefname{remark}{remark}{remarks}
\crefname{section}{section}{sections}
\crefname{equation}{equation}{equations}

\DeclareMathOperator{\supp}{supp}
\DeclareMathOperator{\diam}{diam}
\DeclareMathOperator{\dis}{dis}
\DeclareMathOperator{\pr}{pr}
\DeclareMathOperator{\Lip}{Lip}

\title[Fell and Wijsman Topologies for the Box Metric]{Metrization of the Fell Topology and the Wijsman Topology for the Box Metric on Closed Downward Families}
\author{Shigeaki Yokota}
\thanks{Graduate School of Science, The University of Osaka, 1-1 Machikaneyama, Toyonaka, Osaka 560-0043, Japan. Corresponding author: \texttt{yokota.shigeaki.fat@osaka-u.ac.jp}.}
\address{Graduate School of Science, The University of Osaka, 1-1 Machikaneyama, Toyonaka, Osaka 560-0043, Japan}
\email{yokota.shigeaki.fat@osaka-u.ac.jp}
\keywords{metric measure spaces, Lipschitz order, pyramids, Fell topology, Wijsman topology, Box distance}
\subjclass[2020]{Primary 53C23;
  Secondary 54B20, 54E35}
\date{}

\begin{document}

\begin{abstract}
We study nonempty Box-closed families of metric measure spaces that are downward under the Lipschitz order. If a space is below a member in this order, it also belongs to the family. Such families need not be pyramids, because directedness is an additional condition. We define a metric on these families by taking the supremum, over all finite metric measure spaces, of the normalized absolute difference between the infimal Box distances from each finite space to the two families. We divide each difference by the sum of the support cardinality and diameter of the finite test space. The topology induced by this metric agrees on the full domain with the relative Fell topology and with the restriction of the Wijsman topology for the Box metric. On the subspace of pyramids, it induces the weak topology and makes the natural inclusion into the hyperspace of nonempty closed subsets a topological embedding for either hyperspace topology. Pyramids associated with metric measure spaces are dense in the pyramid space, which is the completion of metric measure spaces under the metric pulled back along the map that sends each space to its associated pyramid.
\end{abstract}

\maketitle

\section{Introduction}\label{sec:introduction}

We work with complete separable metric spaces equipped with a full-support Borel probability measure. Gromov's work on metric-measure geometry uses the Box distance to compare such spaces and the concentration topology to study their convergence \cite{gromov2007met}\cite[Chapters~4--6]{shioya2016mmg}. The Box distance compares two spaces through a coupling, a probability measure on their product whose marginals are the given measures. On a subset of large coupling mass, pairwise distances in the two spaces must have small distortion, while the mass outside that subset is discarded; see the exact formula in \Cref{eq:box-coupling}.

Given a coupling $\pi$ of the measures on two mm-spaces, pull each space's family of real-valued $1$-Lipschitz functions back to the product. For measurable real-valued functions $f,g$ on the product, their Ky Fan distance is $\inf\{\varepsilon\geq0\mid\pi(\{|f-g|>\varepsilon\})\leq\varepsilon\}$. We identify functions that agree $\pi$-almost everywhere. Compare the pulled-back families by the Hausdorff distance induced by this metric; the observable distance is the infimum of these comparisons over all such couplings \cite[Definition~2.13 and Theorem~5.2]{nakajima2022coupling}. It induces the concentration topology \cite[Chapters~4--6]{shioya2016mmg}. By Shioya's Corollary~5.8, a sequence of mm-spaces exhibits measure concentration if and only if it converges to a one-point mm-space in observable distance \cite[Corollary~5.8]{shioya2016mmg}. Limits in the concentration topology need not be one-point mm-spaces; the equivalence singles out the case corresponding to measure concentration.

Gromov introduced pyramids and the corresponding compactification in the study of concentration \cite[Definitions~3\(\frac12\).51--3\(\frac12\).52, pp.~212--213]{gromov2007met}. Shioya added the nonemptiness and Box-closedness conditions used in our definition, defined the weak topology, and constructed a compact metric compatible with it \cite[Definition~6.3, §6.5, p.~101, §§6.2--6.4, Theorem~6.22]{shioya2016mmg}. The Lipschitz order is the relation $Y\prec X$ when a measure-preserving $1$-Lipschitz map sends the mm-space $X$ to the mm-space $Y$ \cite[Definition~2.10]{shioya2016mmg}. A pyramid is a nonempty, Box-closed, downward-closed, directed family for this order: every pair of its members has a common upper bound in the family \cite[Definition~6.3]{shioya2016mmg}.

Kazukawa, Nakajima, and Shioya study the weak topology on pyramids by means of hyperspace theory \cite{kazukawa-nakajima-shioya2024topological} and ask whether the pyramid space admits a topological embedding into the hyperspace of closed subsets of the Box metric space of mm-spaces, equipped with either the Fell topology or the Wijsman topology on that metric space \cite[Question~9.5]{kazukawa-nakajima-shioya2024topological}. Since a pyramid is already a nonempty Box-closed subset of the space of mm-spaces, its natural inclusion is a direct way to answer their question. We show that the relative Fell topology and the restriction to nonempty Box-closed families downward for the Lipschitz order of the Wijsman topology on $(\mathcal X,\Box)$ coincide. Their common restriction to pyramids is the weak topology. Related work on pyramids includes Esaki, Kazukawa, and Mitsuishi's study of invariants, scaling, and product operations \cite{esaki-kazukawa-mitsuishi2024invariants}.

We identify mm-spaces up to measure-preserving isometry and write $\mathcal X$ for the resulting set of classes. A subfamily $\mathcal A$ is downward if $X\in\mathcal A$ and $Y\prec X$ imply $Y\in\mathcal A$. Write $\mathcal D$ for the set of nonempty Box-closed downward subfamilies of $\mathcal X$. For $X\in\mathcal X$ and $\mathcal A\in\mathcal D$, put
\[
 \Box(X,\mathcal A)\coloneqq\inf_{Y\in\mathcal A}\Box(X,Y).
\]
For a finite mm-space $T$, write $\#T$ for the cardinality of its support and $\diam T$ for its diameter. For nonempty Box-closed downward families $\mathcal A,\mathcal B\in\mathcal D$, define
\begin{equation}\label{eq:intro-dfin}
 d_{\mathrm{fin}}(\mathcal A,\mathcal B)
 \coloneqq
 \sup_{T\text{ finite}}
 \frac{|\Box(T,\mathcal A)-\Box(T,\mathcal B)|}{\#T+\diam T},
 \end{equation}
where the supremum ranges over finite mm-spaces $T$.

For each finite mm-space $T$, consider the map $\mathcal A\mapsto\Box(T,\mathcal A)$ on $\mathcal D$. The initial topology induced by these maps is generated by the sets
\[
 \{\mathcal A\in\mathcal D\mid\Box(T,\mathcal A)\in O\},
\]
where $T$ ranges over all finite metric measure spaces and $O\subset[0,1]$ is open.

The ambient Wijsman topology on the hyperspace of nonempty closed subsets of $(\mathcal X,\Box)$ is generated by point-to-set Box-distance coordinates. Its restriction to $\mathcal D$ is generated by the sets
\[
 \{\mathcal A\in\mathcal D\mid\Box(X,\mathcal A)\in O\},
\]
where $X$ ranges over all of $\mathcal X$ and $O\subset[0,1]$ is open \cite[Chapter~2, §2.1]{beer1993topologies}. The ambient Fell topology is defined on the same hyperspace by open-hit and compact-miss sets. Its restriction to $\mathcal D$ has subbasis consisting of the sets $\{\mathcal A\in\mathcal D\mid\mathcal A\cap U\ne\varnothing\}$ and $\{\mathcal A\in\mathcal D\mid\mathcal A\cap K=\varnothing\}$, where $U$ ranges over open subsets and $K$ over compact subsets of $(\mathcal X,\Box)$ \cite[§5.1, Definition~5.1.1]{beer1993topologies}.

Weak convergence of pyramids is sequential Painlev\'e--Kuratowski convergence, also called weak Hausdorff convergence in mm-space theory. For pyramids $\mathcal P_n$ and $\mathcal P$, $\mathcal P_n$ converges weakly to $\mathcal P$ as $n\to\infty$ if and only if
\[
 X\in\mathcal P\ \Longrightarrow\ \lim_{n\to\infty}\Box(X,\mathcal P_n)=0,
 \qquad
 X\notin\mathcal P\ \Longrightarrow\ \liminf_{n\to\infty}\Box(X,\mathcal P_n)>0
\]
for every $X\in\mathcal X$ \cite[Definition~6.4]{shioya2016mmg}.

\begin{theorem}\label{thm:intro-metric}
The function $d_{\mathrm{fin}}$ is a metric on $\mathcal D$. The initial topology just described coincides on $\mathcal D$ with the topology induced by $d_{\mathrm{fin}}$. Its restriction to the pyramid space is the weak topology.
\end{theorem}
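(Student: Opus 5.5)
The argument has three parts: $d_{\mathrm{fin}}$ is a metric, its topology equals the initial topology of the coordinates $\mathcal A\mapsto\Box(T,\mathcal A)$ with $T$ finite, and on pyramids this topology is the weak topology.

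\textbf{Metric.} Since $\Box\le 1$ and $\#T+\diam T\ge 1$, each quotient in \eqref{eq:intro-dfin} lies in $[0,1]$. Symmetry and the triangle inequality hold termwise and pass to the supremum. For separation, suppose $d_{\mathrm{fin}}(\mathcal A,\mathcal B)=0$. Then $\Box(T,\mathcal A)=\Box(T,\mathcal B)$ for every finite $T$. Finite mm-spaces are $\Box$-dense in $\mathcal X$, and $X\mapsto\Box(X,\mathcal A)$ is $1$-Lipschitz, so $\Box(X,\mathcal A)=\Box(X,\mathcal B)$ for all $X\in\mathcal X$. Since both families are Box-closed, $\mathcal A=\{X:\Box(X,\mathcal A)=0\}=\mathcal B$. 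Downwardness is not needed here.

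\textbf{Topologies agree.} Each coordinate is $d_{\mathrm{fin}}$-Lipschitz with constant $\#T+\diam T$, so the initial topology is coarser than the metric topology. For the converse, fix $\mathcal A$ and $\varepsilon>0$. Quotients with $\#T+\diam T>1/\varepsilon$ are automatically below $\varepsilon$. It therefore suffices to control $|\Box(T,\mathcal A)-\Box(T,\mathcal B)|$ uniformly over the class $\mathcal F_N$ of finite $T$ with $\#T\le N$ and $\diam T\le N$, using only finitely many coordinates. This class is $\Box$-totally bounded: weights and distance matrices range over a compact set, and $\Box$ depends continuously on them. Choose a finite $\varepsilon$-net $T_1,\dots,T_k$ of $\mathcal F_N$. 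The $1$-Lipschitz property in $T$ gives
\[
|\Box(T,\mathcal A)-\Box(T,\mathcal B)|\le 2\varepsilon+\max_i|\Box(T_i,\mathcal A)-\Box(T_i,\mathcal B)|\qquad (T\in\mathcal F_N).
\]
Hence a finite intersection of coordinate neighbourhoods is contained in a $d_{\mathrm{fin}}$-ball of radius $O(\varepsilon)$.

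The main obstacle is making total boundedness of $\mathcal F_N$ rigorous. I would argue directly: perturbing weights by $\delta$ costs at most about $\delta N$ in $\Box$, and perturbing distances by $\delta$ costs at most $\delta$ in $\Box$. A finite grid of weights and distances then gives the net.

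\textbf{Pyramids.} Weak convergence is Painlevé--Kuratowski convergence, and the weak topology is compact metrizable (Shioya). It therefore suffices to show that the identity from the weak topology to the $d_{\mathrm{fin}}$-topology is continuous, i.e.\ that weak convergence $\mathcal P_n\to\mathcal P$ implies $\Box(T,\mathcal P_n)\to\Box(T,\mathcal P)$ for every finite $T$. A continuous bijection from a compact space to a Hausdorff space is then a homeomorphism.

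The upper bound $\limsup_n\Box(T,\mathcal P_n)\le\Box(T,\mathcal P)$ comes from near-minimizers $Y\in\mathcal P$, since $\Box(Y,\mathcal P_n)\to0$. The lower bound is the delicate step. Suppose along a subsequence there are $Y_n\in\mathcal P_n$ with $\Box(T,Y_n)\to r<\Box(T,\mathcal P)$. Using downwardness, I would replace each $Y_n$ by a finite space $Y_n'\prec Y_n$, with at most $\#T$ points and bounded diameter, obtained by pushing forward along a map adapted to a near-optimal coupling with $T$. The spaces $Y_n'$ still lie in $\mathcal P_n$, still satisfy $\Box(T,Y_n')\lesssim r$, and now lie in a $\Box$-compact set, so they subconverge to some $Y$.

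The Painlevé--Kuratowski upper limit condition forces $Y\in\mathcal P$, because otherwise $\liminf_n\Box(Y,\mathcal P_n)>0$. This gives $\Box(T,\mathcal P)\le r$, a contradiction. I expect the construction of $Y_n'$ to be the main technical step of the whole proof. If a clean Lipschitz-order reduction is unavailable, I would instead invoke Shioya's characterization of weak convergence through approximating finite-dimensional pyramids.
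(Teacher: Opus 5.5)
Your metric and topology-comparison arguments are sound and essentially the paper's: separation via finite Box-density and closedness, and a finite net in the compact class of finite spaces with $\#T\le N$, $\diam T\le N$ plus the $1/N$ tail. Using compactness of the weak topology to get the reverse direction for free is a legitimate shortcut.

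\textbf{The gap.} The problem is the step you yourself flag. In general there is no finite replacement $Y_n'\prec Y_n$ with at most $\#T$ points and $\Box(T,Y_n')$ close to $\Box(T,Y_n)$. A measure-preserving $1$-Lipschitz map is continuous, so a connected $Y$ maps to a single point of any finite metric space. Hence the only finite mm-space dominated by a connected $Y$ is the one-point space $*$, whatever cardinality you allow. For a concrete example, let $T$ consist of two points at distance $1/4$ with mass $1/2$ each. Let $Y=[0,1/4]$ carry the measure $(1-\eta)\frac12(\delta_0+\delta_{1/4})$ plus $\eta$ times normalized Lebesgue measure. Coupling the two atoms with the two points of $T$ gives $\Box(T,Y)\le\eta$. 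Yet every finite member of $\mathcal P_Y$ is $*$, and $\Box(T,*)=1/4$. So no reduction to finite spaces preserves the distance to $T$, and your subsequence argument has no compact set from which to extract a limit.

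\textbf{The missing idea.} The replacement only needs to lie in a fixed Box-compact class determined by $T$; it need not be finite. The paper starts from a coupling and a closed good set $S$ witnessing $\Box_{\succ}(Y,T)<\varepsilon$, which follows from $\Box(T,Y)<\varepsilon$. It maps $Y$ into the cube $[0,D]^m$, with $D=\diam T$, $m=\#T$ and the supremum metric, via $u_i(y)=\min\{D,\inf_{(y',t')\in S}(d_Y(y,y')+d_T(t',t_i))\}$. Each $u_i$ is $1$-Lipschitz, and $d_T(t,t_i)-\varepsilon\le u_i(y)\le d_T(t,t_i)$ for $(y,t)\in S$. Therefore the space $Z$ given by $u_*\mu_Y$ on its support satisfies $Z\prec Y$ and $\Box(T,Z)\le\varepsilon$ (\Cref{thm:finite-realization}). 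All such $Z$ lie in $\mathscr C_T$, which is Box-compact by weak compactness of probability measures on the cube together with $\Box\le 2d_P$. This yields \Cref{thm:compact-attainment}, and then your extraction of a limit in $\mathcal P$ goes through verbatim.

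Your fallback to Shioya's finite-dimensional approximations is in the right spirit, since those are built from pushforwards into $(\mathbb R^N,\|\cdot\|_\infty)$. It still needs exactly this realization, however, to know that $\Box(T,\mathcal P)$ is already attained within the cube class of dimension $\#T$ and side $\diam T$.
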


\begin{theorem}\label{thm:intro-hyperspace}
On $\mathcal D$, the relative Fell topology, the restriction of the Wijsman topology on $(\mathcal X,\Box)$, and the topology induced by $d_{\mathrm{fin}}$ coincide.
\end{theorem}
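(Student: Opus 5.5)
The plan is to use \Cref{thm:intro-metric}: it identifies the $d_{\mathrm{fin}}$-topology with the initial topology $\tau_{\mathrm{fin}}$ of the finite coordinates $\mathcal A\mapsto\Box(T,\mathcal A)$. Write $\tau_W$ for the restricted Wijsman topology and $\tau_F$ for the relative Fell topology. I will prove the chain $\tau_{\mathrm{fin}}\subset\tau_F\subset\tau_W\subset\tau_{\mathrm{fin}}$.

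\emph{Step 1: $\tau_W\subset\tau_{\mathrm{fin}}$.} Since $\tau_W$ is generated by all coordinates $\Box(X,\cdot)$, I need each of them to be $\tau_{\mathrm{fin}}$-continuous. By the triangle inequality, $|\Box(X,\mathcal A)-\Box(X',\mathcal A)|\le\Box(X,X')$ uniformly in $\mathcal A$. Finite mm-spaces are $\Box$-dense in $\mathcal X$, so choose finite $T_k$ with $\Box(X,T_k)\to0$. Then $\Box(X,\cdot)$ is a uniform limit of the $\tau_{\mathrm{fin}}$-continuous functions $\Box(T_k,\cdot)$, hence is itself continuous. The reverse inclusion $\tau_{\mathrm{fin}}\subset\tau_W$ is trivial, so this step already gives $\tau_W=\tau_{\mathrm{fin}}$.

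\emph{Step 2: $\tau_F\subset\tau_W$.} This holds in any metric space. A hit set $\{\mathcal A\cap U\neq\varnothing\}$ is a union of sets $\{\Box(X,\mathcal A)<\varepsilon\}$ over balls $B(X,\varepsilon)\subset U$, so it is Wijsman-open. For a miss set with compact $K$ and $\mathcal A\cap K=\varnothing$, put $\delta=\min_{X\in K}\Box(X,\mathcal A)>0$, which exists because $\Box(\cdot,\mathcal A)$ is $1$-Lipschitz. Take a finite $\delta/3$-net $\{X_i\}$ of $K$. Then the Wijsman neighbourhood $\bigcap_i\{\Box(X_i,\mathcal B)>2\delta/3\}$ of $\mathcal A$ lies inside the miss set.

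\emph{Step 3: $\tau_{\mathrm{fin}}\subset\tau_F$.} This is where downwardness must enter. Sets $\{\Box(T,\mathcal A)<r\}$ are hit sets of open balls, so only the sets $\{\Box(T,\mathcal A)>r\}$ need Fell neighbourhoods. Fix a finite $T=\{t_1,\dots,t_N\}$ and $D=\diam T+1$. Let $\mathcal K$ be the family of mm-spaces realized as probability measures on the cube $[0,D]^N\subset\ell_\infty^N$. It has uniformly bounded diameter and uniform covering numbers, so its Box-closure $\overline{\mathcal K}$ is Box-compact by Shioya's precompactness criterion. The key lemma I will prove is the following. \emph{If $Y\in\mathcal X$ and $\Box(T,Y)<s$, then some $Y'\prec Y$ with $Y'\in\mathcal K$ satisfies $\Box(T,Y')\le C s$,} where $C$ is a constant; ideally $C=1$, or at least $\Box(T,Y')\to0$ as $s\to0$ in a quantitative way.

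To prove the lemma, take a near-optimal coupling and a set of large mass on which distortion is $<s$. Choose points $p_i\in Y$ matched to $t_i$, and map $Y$ by $y\mapsto(\min(d(y,p_i),D))_i$. This map is $1$-Lipschitz into $\ell_\infty^N$, so its image $Y'$ with the push-forward measure satisfies $Y'\prec Y$. Meanwhile $T$ embeds isometrically by $t\mapsto(d(t,t_i))_i$, and the two images nearly coincide on the good set. One delicacy is that some $t_i$ might carry small mass and have no matched partner; I will handle this by choosing $p_i$ as the image of a point of the good set near $t_i$, or by discarding light points at a cost absorbed into $s$.

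Given the lemma, suppose $\Box(T,\mathcal A)>r$. If the lemma holds with $C=1$, choose $r<r'<\Box(T,\mathcal A)$. Set $K=\{Z\in\overline{\mathcal K}\mid\Box(T,Z)\le r'\}$, which is compact. Then $\mathcal A\cap K=\varnothing$. Conversely, if $\mathcal B\cap K=\varnothing$, then $\Box(T,\mathcal B)\ge r'>r$: otherwise some $Y\in\mathcal B$ has $\Box(T,Y)<r'$, and the lemma together with downwardness yields $Y'\in\mathcal B\cap K$. So the miss set of $K$ is a Fell neighbourhood of $\mathcal A$ inside $\{\Box(T,\cdot)>r\}$.

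If only a non-sharp constant is available, I will instead show Fell-continuity at each $\mathcal A$ sequentially (nets). If $\mathcal B_\alpha\to\mathcal A$ in Fell and $\Box(T,\mathcal B_\alpha)<r$ frequently, the lemma produces $Y'_\alpha\in\mathcal B_\alpha\cap\overline{\mathcal K}$ close to $T$. Compactness gives a cluster point, and the Fell miss sets force that cluster point into $\mathcal A$ because $\mathcal A$ is closed. Obtaining a sharp enough version of the lemma, giving $\Box(T,Y')$ close to $\Box(T,Y)$ uniformly, is the main obstacle I expect.
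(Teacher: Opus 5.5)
Your chain $\tau_{\mathrm{fin}}\subset\tau_F\subset\tau_W\subset\tau_{\mathrm{fin}}$ has the same architecture as the paper's proof. Fell is below Wijsman in any metric space, $\Box(X,\cdot)$ is a uniform limit of finite coordinates, and finite coordinates are Fell-continuous because downward families can be probed inside a compact class of dominated spaces. Steps~1 and~2 are correct.

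The gap is the key lemma of Step~3. You do not prove it, and the construction you sketch does not give $C=1$. With one matched point $p_i$ per $t_i$, the coordinate errors $\min\{d_Y(y,p_i),D\}-d_T(t,t_i)$ on the good set are two-sided in $(-s,s)$. The difference of two of them can therefore approach $2s$, so you only get $\Box(T,Y')\le 2s$.

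This loss really occurs. Let $Y=\{p_1,y_1,p_2,y_2\}$ carry the uniform measure and the cycle metric with $d(p_1,y_1)=d(p_2,y_2)=\sigma$, $d(y_1,p_2)=d(y_2,p_1)=1-\sigma$, and $d(p_1,p_2)=d(y_1,y_2)=1$. Let $T$ be two points at distance $1$ with mass $1/2$ each. Then $\Box(T,Y)\le\sigma$. However, $y\mapsto(d(y,p_1),d(y,p_2))$ sends $y_1,y_2$ to $(\sigma,1-\sigma)$ and $(1-\sigma,\sigma)$, which are at $\ell_\infty$-distance $1-2\sigma$. The image is at Box distance exactly $2\sigma$ from $T$ when $\sigma<1/8$.

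Your fallback does not absorb a constant $C>1$. The cluster point $Y'\in\mathcal A$ only satisfies $\Box(T,Y')\le Cr$, so you learn $\Box(T,\mathcal A)\le Cr$, which does not contradict $\Box(T,\mathcal A)>r$. Thus Fell lower semicontinuity of $\Box(T,\cdot)$, and with it $\tau_{\mathrm{fin}}\subset\tau_F$, remains unproved.

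The missing idea is to build the coordinates from the whole good set instead of from single partners, as in \Cref{thm:finite-realization}: set $u_i(x)=\min\{D,\inf_{(x',t')\in S}(d_X(x,x')+d_T(t',t_i))\}$. Taking $(x',t')=(x,t)$ gives $u_i(x)\le d_T(t,t_i)$ on $S$. The distortion bound and the triangle inequality give $u_i(x)\ge d_T(t,t_i)-s$. So the errors are one-sided in $[0,s]$ and their differences are at most $s$. Hence $\Box(T,u_*\mu_Y)\le s$, with $u_*\mu_Y$ in the cube class and dominated by $Y$. This also removes your worry about unmatched light points, since no $t_i$ needs a partner.

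With this sharp lemma, your miss-set argument with $K=\{Z\in\overline{\mathcal K}\mid\Box(T,Z)\le r'\}$ and $r<r'<\Box(T,\mathcal A)$ is correct as written. It even avoids the attainment statement \Cref{thm:compact-attainment}, which the paper uses to get the exact equivalence $\Box(T,\mathcal A)>r\iff\mathcal A\cap K_{T,r}=\varnothing$.
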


\begin{corollary}\label{cor:intro-hyperspace-embedding}
The inclusion of the space of pyramids into the hyperspace of nonempty closed subsets of $(\mathcal X,\Box)$ is a topological embedding for the ambient Fell and Wijsman topologies.
\end{corollary}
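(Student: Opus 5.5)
The corollary should follow from \Cref{thm:intro-metric} and \Cref{thm:intro-hyperspace} together with a few general facts about subspace topologies.

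First, the inclusion is well defined. Every pyramid is nonempty, Box-closed and downward, so the space of pyramids, written $\Pi$, is a subset of $\mathcal D$. The set $\mathcal D$ is in turn a subset of the hyperspace $\mathrm{CL}(\mathcal X)$ of nonempty closed subsets of $(\mathcal X,\Box)$. The inclusion $\Pi\hookrightarrow\mathrm{CL}(\mathcal X)$ is injective, so being a topological embedding means exactly one thing: the topology that $\mathrm{CL}(\mathcal X)$ induces on $\Pi$ must equal the weak topology on $\Pi$. This is the topology used for the pyramid space throughout the paper.

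Fix either ambient topology $\tau$ on $\mathrm{CL}(\mathcal X)$, Fell or Wijsman. Taking subspace topologies is transitive, so
\[
 \tau|_{\Pi}=(\tau|_{\mathcal D})|_{\Pi}.
\]
By \Cref{thm:intro-hyperspace}, $\tau|_{\mathcal D}$ equals the topology induced by $d_{\mathrm{fin}}$ on $\mathcal D$ in both cases. By \Cref{thm:intro-metric}, this topology is the initial topology, and its restriction to $\Pi$ is the weak topology. Hence $\tau|_{\Pi}$ is the weak topology. This shows that the inclusion is continuous and that it is a homeomorphism onto its image.

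One point needs care: "weak topology" must mean the same thing in \Cref{thm:intro-metric} and in the corollary. The weak topology was introduced through sequential convergence, and Shioya metrizes it by a compact metric. So I would check that the statement of \Cref{thm:intro-metric} refers to this topology itself and not merely to its convergent sequences. Even if it only gave agreement of convergent sequences, the conclusion would still hold. Both the restriction of $d_{\mathrm{fin}}$ to $\Pi$ and Shioya's metric are metrics, and two metrizable topologies with the same convergent sequences coincide. This is the only possible obstacle, and it is routine.

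Everything else is formal. I expect no step to be difficult once the two theorems are available.
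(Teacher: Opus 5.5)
Your proposal is correct and matches the paper's proof. The paper combines \Cref{thm:hyperspace-topologies}, which says both relative hyperspace topologies on $\mathcal D$ equal the $d_{\mathrm{fin}}$ topology, with \Cref{prop:metric-topology}, which says the $d_{\mathrm{fin}}$ topology restricted to $\Pi$ is the weak topology, and the sequence-versus-topology point you flag is settled inside \Cref{prop:metric-topology} by your argument: both topologies on $\Pi$ are metrizable, hence sequential, and \Cref{thm:coordinates-weak} shows their convergent sequences agree.
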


For $X\in\mathcal X$, write $\mathcal P_X\coloneqq\{Y\in\mathcal X\mid Y\prec X\}$ for the pyramid associated with $X$. We use the distance between associated pyramids as a metric on mm-spaces:
\[
 d_{\mathrm{fin}}(X,Y)\coloneqq d_{\mathrm{fin}}(\mathcal P_X,\mathcal P_Y).
\]

\begin{proposition}\label{prop:intro-associated-completion}
The map $X\mapsto\mathcal P_X$ embeds the pullback metric space of mm-spaces isometrically into the pyramid space equipped with $d_{\mathrm{fin}}$. Its image is dense by \cite[Lemma~7.14]{shioya2016mmg}. Thus this pyramid space is its completion.
\end{proposition}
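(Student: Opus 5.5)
The proof has three parts: the map $X\mapsto\mathcal P_X$ is well defined into the pyramid space, it is an isometry by construction, and its image is dense, so the pyramid space is a completion.

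\emph{The map lands in the pyramid space.} For $X\in\mathcal X$, the family $\mathcal P_X$ is nonempty, since $X\in\mathcal P_X$. It is downward because the Lipschitz order is transitive: a composition of measure-preserving $1$-Lipschitz maps is again one. It is directed, since $X$ is a common upper bound for any two members. Box-closedness is the one nontrivial property, and I would take it from Shioya's treatment of associated pyramids \cite[§6]{shioya2016mmg}.

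\emph{Isometry and the metric axioms.} The identity $d_{\mathrm{fin}}(X,Y)=d_{\mathrm{fin}}(\mathcal P_X,\mathcal P_Y)$ is the definition, so the map is isometric as soon as the pullback is a genuine metric on $\mathcal X$. Symmetry and the triangle inequality follow from \Cref{thm:intro-metric}. For definiteness I need $\mathcal P_X=\mathcal P_Y\Rightarrow X=Y$. If the pyramids agree, then $X\prec Y$ and $Y\prec X$, and antisymmetry of the Lipschitz order on isomorphism classes \cite[Lemma~2.12]{shioya2016mmg} gives $X=Y$. I expect this citation of antisymmetry to be the only nontrivial input in this step. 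Note that a $d_{\mathrm{fin}}$-Cauchy sequence in $\mathcal X$ need not converge in $\mathcal X$; this is exactly why the result is phrased as a completion.

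\emph{Density.} Every pyramid $\mathcal P$ is the weak limit of a sequence of associated pyramids $\mathcal P_{X_n}$ \cite[Lemma~7.14]{shioya2016mmg}. By \Cref{thm:intro-metric}, the $d_{\mathrm{fin}}$-topology restricted to pyramids is the weak topology, and $d_{\mathrm{fin}}$ is a metric, so this topology is first countable. Weak convergence of the sequence therefore gives $d_{\mathrm{fin}}(\mathcal P_{X_n},\mathcal P)\to0$, and the image is dense.

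\emph{Completion.} It remains to show the pyramid space with $d_{\mathrm{fin}}$ is complete. The weak topology on pyramids is compact \cite[Theorem~6.22]{shioya2016mmg}, and \Cref{thm:intro-metric} identifies it with the $d_{\mathrm{fin}}$-topology. So $(\text{pyramids},d_{\mathrm{fin}})$ is a compact metric space, hence complete. A complete metric space containing a dense isometric copy of $(\mathcal X,d_{\mathrm{fin}})$ is, by uniqueness of completions, its completion. The main obstacle is the step from topological to metric statements: density and completeness are transferred through the coincidence of topologies in \Cref{thm:intro-metric} together with compactness. Completeness genuinely needs compactness here, since it is not a topological invariant and does not follow from the coincidence of topologies alone.
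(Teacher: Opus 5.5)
Your proposal is correct and follows the paper's proof essentially step for step: isometry holds by the definition of the pullback metric, and injectivity comes from $\mathcal P_X=\mathcal P_Y\Rightarrow X\prec Y$ and $Y\prec X$ together with antisymmetry of the Lipschitz order; density is Shioya's weak density (Lemma~7.14) transferred through the coincidence of the weak and $d_{\mathrm{fin}}$ topologies on $\Pi$, and completeness follows from compactness of the weak topology (Theorem~6.22), just as you argue. The paper takes for granted that $\mathcal P_X$ is a pyramid, which you check explicitly, and it cites Shioya's Proposition~2.11 rather than Lemma~2.12 for antisymmetry.
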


For each finite mm-space $T$, a member of a nonempty Box-closed downward family that nearly minimizes its distance from $T$ can be replaced by a dominated mm-space in the compact class determined by $T$, with distance from $T$ no larger up to an arbitrarily small error. Downward closure keeps these replacements in the family, and compactness together with closedness gives a member attaining the minimum. This attainment turns inequalities for $\Box(T,\mathcal A)$ into open-hit and compact-miss conditions, which describe the Fell topology. For the metric, finite nets in compact classes of finite mm-spaces with bounded cardinality and diameter control the coordinate differences there. Since Box distances lie in $[0,1]$, the denominator $\#T+\diam T$ makes the normalized differences uniformly small outside those bounds.

Writing $d_{\mathrm{conc}}$ for the observable distance, Shioya's concentration characterization and \Cref{thm:intro-metric} show that, for every sequence $X_n$ and every mm-space $X$, $d_{\mathrm{conc}}(X_n,X)\to0$ as $n\to\infty$ if and only if $d_{\mathrm{fin}}(\mathcal P_{X_n},\mathcal P_X)\to0$ as $n\to\infty$ \cite[Proposition~6.13]{shioya2016mmg}.

\Cref{sec:preliminaries} fixes the metric-measure, coupling, order, Box, and pyramid conventions. \Cref{sec:finite-metric} proves that $d_{\mathrm{fin}}$ is a metric, identifies its topology on $\mathcal D$ with that generated by distances from finite mm-spaces, and shows that its restriction to pyramids is the weak topology. \Cref{sec:hyperspace} identifies the relative Fell topology and the restriction to $\mathcal D$ of the Wijsman topology on $(\mathcal X,\Box)$, proves that $\mathcal D$ is closed in the ambient hyperspace of nonempty closed subsets of $(\mathcal X,\Box)$, and relates net convergence and the pyramid completion to weak convergence and its known compactness and density results. \Cref{sec:concentration} proves the normalized forward estimate from observable distance and records the absence of a uniform converse modulus.

\section{Preliminaries}\label{sec:preliminaries}

This section records the definitions and standard facts used in the paper. We follow \cite[Chapters~1--6]{shioya2016mmg} for metric measure spaces, the Box distance, the Lipschitz order, pyramids, and weak convergence. The coupling and order facts supply the inputs for the realization in Section~\ref{sec:finite-metric}. Compactness of probability measures on the cube fixed by a finite mm-space gives attainment, while compactness of finite-mm-space classes with bounded cardinality and diameter provides finite nets to control the differences between their distances to two families.

\begin{definition}[Metric measure space {\cite[Definition~2.8 and §2.2]{shioya2016mmg}}]\label{def:mm-space}
A triple $(X,d_X,\mu_X)$, or simply $X$, is an \emph{mm-space} if $(X,d_X)$ is a complete separable metric space and $\mu_X$ is a Borel probability measure whose support is $X$.
\end{definition}

We identify mm-spaces up to measure-preserving isometry and denote the set of mm-isomorphism classes by $\mathcal X$. The distance and measure of an mm-space $X$ are denoted by $d_X$ and $\mu_X$. For probability measures $\mu$ and $\nu$, let $\mathcal T(\mu,\nu)$ be the set of their couplings. We use $\mathcal T$ rather than the customary $\Pi(\mu,\nu)$ to avoid confusion with the pyramid space $\Pi$.

For mm-spaces $X$ and $Y$, a coupling $\pi\in\mathcal T(\mu_X,\mu_Y)$, and a nonempty set $S\subset X\times Y$, set
\[
 \dis S\coloneqq
 \sup\{|d_X(x,x')-d_Y(y,y')|\mid (x,y),(x',y')\in S\},
\]
and put $\dis\varnothing\coloneqq\infty$. The Box distance has the closed-good-set representation
\begin{equation}\label{eq:box-coupling}
 \Box(X,Y)=
 \min_{\substack{\pi\in\mathcal T(\mu_X,\mu_Y)\\ S\subset X\times Y\ \mathrm{closed}}}
 \max\{\dis S,1-\pi(S)\}.
\end{equation}
The representation and attainment in \eqref{eq:box-coupling} are proved in \cite[Theorem~1.1]{nakajima2022coupling}.

\begin{definition}[Lipschitz order {\cite[Definition~2.10]{shioya2016mmg}}]\label{def:lipschitz-order}
For mm-spaces $X$ and $Y$, write $Y\prec X$ if there is a measure-preserving $1$-Lipschitz map from $X$ to $Y$.
\end{definition}

The Lipschitz order is antisymmetric on $\mathcal X$ \cite[Proposition~2.11]{shioya2016mmg}. We use the additive-error extension in \cite[Definition~1.2]{nakajima2024lipschitz}. For a closed set $S\subset X\times Y$, put
\[
 \dis_{\succ}S\coloneqq
 \sup\{d_Y(y,y')-d_X(x,x')\mid (x,y),(x',y')\in S\},
 \qquad
 \dis_{\succ}\varnothing\coloneqq0.
\]
For $\varepsilon\geq0$, write $X\succ_\varepsilon Y$ if there are $\pi\in\mathcal T(\mu_X,\mu_Y)$ and a closed set $S\subset X\times Y$ such that
\[
 \dis_{\succ}S\leq\varepsilon,
 \qquad
 \pi(S)\geq1-\varepsilon.
\]
The zero-error and transitivity properties of this relation are proved in \cite[Theorem~1.3]{nakajima2024lipschitz}. Define
\[
 \Box_{\succ}(X,Y)\coloneqq\inf\{\varepsilon\geq0\mid X\succ_\varepsilon Y\}.
\]
By \cite[Corollary~1.6]{nakajima2024lipschitz},
\begin{align}
 \Box_{\succ}(X,Y)=0&\quad\Longleftrightarrow\quad Y\prec X,\label{eq:succ-zero}\\
 \Box_{\succ}(X,Z)&\leq\Box_{\succ}(X,Y)+\Box_{\succ}(Y,Z).\label{eq:succ-triangle}
\end{align}
The closed-good-set definitions also give
\begin{equation}\label{eq:succ-box}
 \Box_{\succ}(X,Y)\leq\Box(X,Y).
\end{equation}
If $X_n\to X$ and $Y_n\to Y$ in the Box distance, $\varepsilon_n\to\varepsilon$, and $X_n\succ_{\varepsilon_n}Y_n$ for every $n$, then $X\succ_\varepsilon Y$ by \cite[Theorem~1.5]{nakajima2024lipschitz}.

\begin{definition}[Pyramid {\cite[Definition~6.3]{shioya2016mmg}}]\label{def:pyramid}
A nonempty subset $\mathcal P\subset\mathcal X$ is a \emph{pyramid} if it is Box-closed, downward for $\prec$, and directed. The last condition means that for every $X,Y\in\mathcal P$, there is $Z\in\mathcal P$ such that $X\prec Z$ and $Y\prec Z$.
\end{definition}

Denote the space of all pyramids by $\Pi$. Weak convergence of pyramids is sequential Painlev\'e--Kuratowski convergence, also called weak Hausdorff convergence in mm-space theory \cite[Definition~6.4]{shioya2016mmg}. Thus $\mathcal P_n$ converges weakly to $\mathcal P$ if
\begin{align*}
 X\in\mathcal P&\quad\Longrightarrow\quad \Box(X,\mathcal P_n)\longrightarrow0,\\
 X\notin\mathcal P&\quad\Longrightarrow\quad \liminf_{n\to\infty}\Box(X,\mathcal P_n)>0
\end{align*}
as $n\to\infty$.

Finite mm-spaces are Box-dense in $\mathcal X$ \cite[Proposition~4.20]{shioya2016mmg}. For a positive integer $N$ and $D\geq0$, the finite mm-spaces with at most $N$ points and diameter at most $D$ form a Box-compact set \cite[Theorem~4.27]{shioya2016mmg}. The space $\mathcal X$ is separable \cite[Proposition~4.25]{shioya2016mmg}. For Borel probability measures, weak convergence means convergence of integrals against every bounded continuous real-valued function \cite[Definition~1.11]{shioya2016mmg}. For $A\subset M$ and $\varepsilon>0$, write $A^\varepsilon\coloneqq\{x\in M\mid d(x,A)<\varepsilon\}$. The Prohorov distance is the infimum of $\varepsilon>0$ for which $\mu(A)\leq\nu(A^\varepsilon)+\varepsilon$ and $\nu(A)\leq\mu(A^\varepsilon)+\varepsilon$ for every Borel set $A\subset M$ \cite[Definition~1.14]{shioya2016mmg}. On a compact metric space, the Borel probability measures are compact for weak convergence, which agrees with convergence in the Prohorov distance \cite[Lemma~1.17(1) and (3)]{shioya2016mmg}. If $\mu$ and $\nu$ are probability measures on one complete separable metric space $(M,d)$, then
\begin{equation}\label{eq:fixed-base-box}
 \Box((M,d,\mu),(M,d,\nu))\leq2d_P(\mu,\nu)
\end{equation}
by \cite[Proposition~4.12]{shioya2016mmg}. Here $d_P$ denotes the Prohorov distance. In this display, each triple $(M,d,\mu)$ or $(M,d,\nu)$ denotes the support of its measure equipped with the restricted metric, so both triples satisfy the full-support convention for mm-spaces.

\section{Box distances from finite mm-spaces}\label{sec:finite-metric}

For an mm-space $X$ and a finite mm-space $T$, $\Box_{\succ}(X,T)$ is the infimum of $\varepsilon\geq0$ such that a coupling of $X$ and $T$ assigns mass at least $1-\varepsilon$ to a closed set $S\subset X\times T$ on which $d_T(t,t')\leq d_X(x,x')+\varepsilon$ for all $(x,t),(x',t')\in S$.
Write the support points of $T$ as $t_1,\ldots,t_m$, and put $D=\diam T$. Whenever $0<\varepsilon<1$ and $\Box_{\succ}(X,T)<\varepsilon$, choose the corresponding coupling $\pi$ and nonempty closed good set $S$ from the definition above. For $i=1,\ldots,m$, define
\[
 u_i(x)\coloneqq\min\left\{D,\inf_{(x',t')\in S}\bigl(d_X(x,x')+d_T(t',t_i)\bigr)\right\}.
\]
For fixed $(x',t')$, the expression inside the infimum changes by at most $d_X(x,y)$ when $x$ changes to $y$. Taking the infimum over $S$ and then the minimum with $D$ preserves this bound, so each $u_i$ is $1$-Lipschitz. Write $u=(u_1,\ldots,u_m)\colon X\to[0,D]^m$. For $(x,t)\in S$, using $(x,t)$ in the infimum gives $u_i(x)\leq d_T(t,t_i)$; the one-sided inequality on $S$ and the triangle inequality give $d_T(t,t_i)-u_i(x)\leq\varepsilon$. Thus $d_T(t,t_i)-u_i(x)\in[0,\varepsilon]$ for all $(x,t)\in S$ and $1\leq i\leq m$. For two points of $S$, the difference between their errors in each coordinate has absolute value at most $\varepsilon$. Since $a(t)\coloneqq(d_T(t,t_i))_{i=1}^m$ is isometric for the supremum metric, this gives $\bigl|\|u(x)-u(y)\|_\infty-d_T(t,s)\bigr|\leq\varepsilon$ for $(x,t),(y,s)\in S$. Push $\pi$ forward by $(x,t)\mapsto(u(x),t)$. The closure of the image of $S$ in $[0,D]^m\times T$ has pushforward mass at least $1-\varepsilon$ and retains the distance bound. The pushforward of $\mu_X$ by $u$, restricted to its support, is an mm-space in this fixed cube dominated by $X$. For $\mathcal A\in\mathcal D$, \Cref{eq:succ-box} gives $\Box_{\succ}(X,T)\leq\Box(T,X)$. Applying the construction to members of $\mathcal A$ whose distances from $T$ are arbitrarily close to $\Box(T,\mathcal A)$ yields dominated replacements in $\mathcal A$ when this distance is below $1$; the one-point space handles the value $1$. Box-compactness of the support-restricted probability measures on $[0,D]^m$ and Box-closedness of $\mathcal A$ then give attainment.

Fix a finite mm-space $T$ with support $\{t_1,\ldots,t_m\}$, and put $D\coloneqq\diam T$. Define
\[
 \mathscr C_T\coloneqq
 \{([0,D]^m,\|\cdot\|_\infty,\nu)\mid \nu\text{ is a Borel probability measure on }[0,D]^m\}.
\]
Each member is restricted to the support of its measure. This convention also applies when $D=0$. By \eqref{eq:fixed-base-box} and compactness of the probability measures on $[0,D]^m$, the class $\mathscr C_T$ is Box-compact.

\begin{theorem}\label{thm:finite-realization}
Let $X$ be an mm-space and let $0<\varepsilon<1$. If
\[
 \Box_{\succ}(X,T)<\varepsilon,
\]
then there is an mm-space $Z\in\mathscr C_T$ such that $Z\prec X$ and $\Box(T,Z)\leq\varepsilon$.
\end{theorem}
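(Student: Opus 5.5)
The plan is to carry out the construction sketched at the start of this section and check each claimed property in turn. Since $\Box_{\succ}(X,T)<\varepsilon$, there are $\pi\in\mathcal T(\mu_X,\mu_T)$ and a closed set $S\subset X\times T$ with $\dis_{\succ}S\leq\varepsilon$ and $\pi(S)\geq1-\varepsilon>0$. In particular $S$ is nonempty, and for $(x,t),(x',t')\in S$ we have $d_T(t,t')\leq d_X(x,x')+\varepsilon$. With $D=\diam T$, define $u_i$ and $u=(u_1,\dots,u_m)\colon X\to[0,D]^m$ as above. Put $\nu\coloneqq u_*\mu_X$ and let $Z$ be $([0,D]^m,\|\cdot\|_\infty,\nu)$ restricted to $\supp\nu$, so that $Z\in\mathscr C_T$.

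\emph{Domination.} Each $u_i$ is $1$-Lipschitz, so $u$ is $1$-Lipschitz into $\ell_\infty^m$. It is also measure-preserving onto $Z$, and $\mu_X(u^{-1}(\supp\nu))=1$. To get a map defined on all of $X$, I would use continuity: $u(X)\subset\overline{u(\supp\mu_X)}$, and $\supp\mu_X=X$ gives $u(X)\subset\supp\nu$. Hence $Z\prec X$.

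\emph{Pointwise estimate.} Fix $(x,t)\in S$. Choosing $(x,t)$ itself in the infimum gives $u_i(x)\leq d_T(t,t_i)\leq D$. For the reverse bound, take any $(x',t')\in S$. Then $d_T(t,t_i)\leq d_T(t,t')+d_T(t',t_i)\leq d_X(x,x')+\varepsilon+d_T(t',t_i)$, and also $d_T(t,t_i)\leq D$. Taking the infimum and then the minimum with $D$ yields $d_T(t,t_i)-u_i(x)\in[0,\varepsilon]$. Now let $a(t)=(d_T(t,t_i))_i$; this is an isometric embedding of $T$ into $\ell_\infty^m$, by the Kuratowski-type argument using that the $t_i$ exhaust $T$. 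For $(x,t),(y,s)\in S$ write $u(x)=a(t)-e$ and $u(y)=a(s)-e'$ with $e,e'\in[0,\varepsilon]^m$. Then $\|e-e'\|_\infty\leq\varepsilon$, so
\[
\bigl|\|u(x)-u(y)\|_\infty-d_T(t,s)\bigr|\leq\varepsilon.
\]

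\emph{Coupling.} Let $\Phi(x,t)=(u(x),t)$ and $\sigma\coloneqq\Phi_*\pi\in\mathcal T(\nu,\mu_T)$. Let $S'$ be the closure of $\Phi(S)$ in $[0,D]^m\times T$, intersected with $\supp\nu\times T$. This set is closed, and $\sigma(S')\geq\pi(S)\geq1-\varepsilon$. The distortion bound above is a closed condition, so it passes to the closure, and $\dis S'\leq\varepsilon$. Then \eqref{eq:box-coupling} gives $\Box(Z,T)\leq\max\{\dis S',1-\sigma(S')\}\leq\varepsilon$.

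The main obstacle is the second step: obtaining the two-sided bound from only the one-sided hypothesis $\dis_{\succ}S\leq\varepsilon$. This works only because of the truncation at $D$, which keeps $u$ inside the cube, together with the fact that $S$ contains the point $(x,t)$ itself, which gives the lower bound. The full-support bookkeeping for $Z$ is routine.
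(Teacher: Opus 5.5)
Your proposal is correct and follows the paper's proof essentially line by line. It uses the same truncated functions $u_i$, the same two-sided coordinate error bound combined with the isometric embedding $a$, and the same pushforward coupling together with the closure of the image of $S$; your non-strict inequalities in place of the paper's strict ones are harmless. The one step to tighten is the full-support claim: the actual reason $u(X)\subset\supp\nu$ is that for every open $O\ni u(x)$ the set $u^{-1}(O)$ is a nonempty open subset of $X$, so $\nu(O)=\mu_X(u^{-1}(O))>0$ because $\mu_X$ has full support, which is exactly the argument the paper gives.
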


\begin{proof}
Choose $\pi\in\mathcal T(\mu_X,\mu_T)$ and a closed set $S\subset X\times T$ such that
\begin{equation}\label{eq:realization-good}
 \pi(S)>1-\varepsilon,
 \qquad
 d_T(t,t')<d_X(x,x')+\varepsilon
\end{equation}
for all $(x,t),(x',t')\in S$. Since $\varepsilon<1$, the set $S$ is nonempty.

For $i=1,\ldots,m$, define
\begin{equation}\label{eq:ui-definition}
 u_i(x)\coloneqq
 \min\left\{D,\inf_{(x',t')\in S}\bigl(d_X(x,x')+d_T(t',t_i)\bigr)\right\},
\end{equation}
and let $u\coloneqq(u_1,\ldots,u_m)\colon X\to[0,D]^m$. The triangle inequality and the $1$-Lipschitz property of truncation to $[0,D]$ give
\begin{equation}\label{eq:ui-lipschitz}
 |u_i(x)-u_i(y)|\leq d_X(x,y)
\end{equation}
for all $x,y\in X$. Thus $u$ is $1$-Lipschitz for the maximum metric.

Fix $(x,t)\in S$. Taking $(x',t')=(x,t)$ in \eqref{eq:ui-definition} gives $u_i(x)\leq d_T(t,t_i)$. For every $(x',t')\in S$, \eqref{eq:realization-good} and the triangle inequality give
\[
 d_T(t,t_i)<d_X(x,x')+d_T(t',t_i)+\varepsilon.
\]
Taking the infimum over $(x',t')\in S$ gives
\[
 d_T(t,t_i)-\varepsilon\leq\inf_{(x',t')\in S}\bigl(d_X(x,x')+d_T(t',t_i)\bigr).
\]
Since $d_T(t,t_i)\leq D$, replacing this infimum by its minimum with $D$ preserves the lower bound. Therefore,
\begin{equation*}
 d_T(t,t_i)-\varepsilon\leq u_i(x)\leq d_T(t,t_i).
 \end{equation*}

Set $a(t)\coloneqq(d_T(t,t_i))_{i=1}^m$. The coordinate indexed by a point $t_j$ shows that
\[
 \|a(t)-a(t')\|_\infty=d_T(t,t').
\]
For every $i$, the two coordinate errors are as follows:
\[
\begin{aligned}
 d_T(t,t_i)-u_i(x)&\in[0,\varepsilon],\\
 d_T(t',t_i)-u_i(x')&\in[0,\varepsilon].
\end{aligned}
\]
\[
\begin{aligned}
&\left|(u_i(x)-u_i(x'))-(d_T(t,t_i)-d_T(t',t_i))\right|\\
&=\left|(d_T(t,t_i)-u_i(x))-(d_T(t',t_i)-u_i(x'))\right|\\
&\leq\varepsilon.
\end{aligned}
\]
Hence
\begin{equation}\label{eq:constant-one-distortion}
 \left|\|u(x)-u(x')\|_\infty-d_T(t,t')\right|\leq\varepsilon
\end{equation}
for all $(x,t),(x',t')\in S$.

Let $\nu\coloneqq u_*\mu_X$ and let $Z\coloneqq([0,D]^m,\|\cdot\|_\infty,\nu)$ restricted to $\supp\nu$. Since $u$ is continuous and $\mu_X$ has full support, $u(X)\subset\supp\nu$. In fact, if $O$ is an open neighborhood of $u(x)$, then $u^{-1}(O)$ is an open neighborhood of $x$, and
\[
 \nu(O)=\mu_X(u^{-1}(O))>0.
\]
It follows from \eqref{eq:ui-lipschitz} that $u\colon X\to\supp\nu$ is a measure-preserving $1$-Lipschitz map. Thus $Z\prec X$, and $Z\in\mathscr C_T$.

The pushforward
\[
 \beta\coloneqq(t,u(x))_*\pi
\]
is a coupling of $\mu_T$ and $\nu$ on $T\times\supp\nu$. Let $G$ be the closure in $T\times\supp\nu$ of the image of $S$ under $(x,t)\mapsto(t,u(x))$. Then $G$ is closed and $\beta(G)\geq\pi(S)>1-\varepsilon$. By \eqref{eq:constant-one-distortion} and continuity of the two metrics, $\dis G\leq\varepsilon$. Applying \eqref{eq:box-coupling} to the full coupling $\beta$ and the closed set $G$ gives $\Box(T,Z)\leq\varepsilon$. This completes the proof.
\end{proof}

\begin{proposition}\label{prop:finite-coordinate}
For every mm-space $X$ and every finite mm-space $T$,
\begin{equation}\label{eq:associated-coordinate}
 \Box_{\succ}(X,T)=\Box(T,\mathcal P_X).
\end{equation}
\end{proposition}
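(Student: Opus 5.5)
We prove the two inequalities separately. Throughout we use that both sides lie in $[0,1]$: for $\Box(T,\mathcal P_X)$ because Box distances are bounded by $1$, and for $\Box_{\succ}(X,T)$ because taking $S=\varnothing$ (or any coupling with $\varepsilon=1$) shows $X\succ_1 T$.

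For the inequality $\Box_{\succ}(X,T)\leq\Box(T,\mathcal P_X)$, take any $Y\in\mathcal P_X$, so $Y\prec X$. By \eqref{eq:succ-zero} we have $\Box_{\succ}(X,Y)=0$. The triangle inequality \eqref{eq:succ-triangle} then gives
\[
 \Box_{\succ}(X,T)\leq\Box_{\succ}(X,Y)+\Box_{\succ}(Y,T)=\Box_{\succ}(Y,T).
\]
By \eqref{eq:succ-box}, $\Box_{\succ}(Y,T)\leq\Box(Y,T)=\Box(T,Y)$. Taking the infimum over $Y\in\mathcal P_X$ yields the claim. The one point to check is that \eqref{eq:succ-box} is applied with the arguments in the right order. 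This is immediate, since $\dis_{\succ}S\leq\dis S$ holds for any closed $S$, and the symmetry of $\Box$ lets us swap the order of $T$ and $Y$.

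For the reverse inequality $\Box(T,\mathcal P_X)\leq\Box_{\succ}(X,T)$, we use \Cref{thm:finite-realization}. Put $\delta\coloneqq\Box_{\succ}(X,T)$.

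If $\delta<1$, then for every $\varepsilon$ with $\delta<\varepsilon<1$ the theorem gives $Z\in\mathscr C_T$ with $Z\prec X$ and $\Box(T,Z)\leq\varepsilon$. Such a $Z$ lies in $\mathcal P_X$, so $\Box(T,\mathcal P_X)\leq\varepsilon$. Letting $\varepsilon\downarrow\delta$ gives $\Box(T,\mathcal P_X)\leq\delta$.

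If $\delta=1$, the inequality is trivial, since $\Box(T,\mathcal P_X)\leq1$ always holds. For instance, the one-point space belongs to $\mathcal P_X$ and has Box distance at most $1$ from $T$.

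Combining the two inequalities gives \eqref{eq:associated-coordinate}. All the real work was already done in \Cref{thm:finite-realization}. The only mild obstacle here is the boundary case, where the hypothesis $0<\varepsilon<1$ of that theorem must be respected. That is why we separate $\delta<1$ from $\delta=1$ and approximate $\delta$ from above by $\varepsilon$ rather than applying the theorem at $\varepsilon=\delta$ itself. We do not need attainment in $\mathscr C_T$ here, because only the infimum over $\mathcal P_X$ appears.
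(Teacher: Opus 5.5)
Your proposal is correct and follows the paper's proof essentially step for step. You get the lower bound from \eqref{eq:succ-zero}, \eqref{eq:succ-triangle}, and \eqref{eq:succ-box}, and the upper bound by applying \Cref{thm:finite-realization} for $\Box_{\succ}(X,T)<\varepsilon<1$ and letting $\varepsilon$ decrease, with the value $1$ handled separately by the trivial bound $\Box(T,\mathcal P_X)\leq1$.
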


\begin{proof}
If $Z\in\mathcal P_X$, then \eqref{eq:succ-zero}, \eqref{eq:succ-triangle}, and \eqref{eq:succ-box} give
\[
 \Box_{\succ}(X,T)\leq\Box_{\succ}(X,Z)+\Box_{\succ}(Z,T)\leq\Box(Z,T).
\]
Taking the infimum over $Z\in\mathcal P_X$ proves one inequality. If $\Box_{\succ}(X,T)<1$, apply \Cref{thm:finite-realization} to every $\varepsilon$ satisfying
\[
 \Box_{\succ}(X,T)<\varepsilon<1.
\]
The resulting space $Z\in\mathcal P_X$ satisfies $\Box(T,Z)\leq\varepsilon$. Letting $\varepsilon\downarrow\Box_{\succ}(X,T)$ gives the reverse inequality. If $\Box_{\succ}(X,T)=1$, the first inequality and the bound $\Box(T,\mathcal P_X)\leq1$ give equality. This completes the proof.
\end{proof}

\begin{theorem}\label{thm:compact-attainment}
Let $\mathcal A\subset\mathcal X$ be a nonempty Box-closed downward family. For every finite mm-space $T$,
\begin{equation}\label{eq:compact-attainment}
 \Box(T,\mathcal A)=\min_{Z\in\mathcal A\cap\mathscr C_T}\Box(T,Z).
\end{equation}
\end{theorem}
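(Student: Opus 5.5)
We prove the two inequalities separately. Attainment then follows from compactness of $\mathcal A\cap\mathscr C_T$.

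The inequality $\Box(T,\mathcal A)\leq\inf_{Z\in\mathcal A\cap\mathscr C_T}\Box(T,Z)$ is immediate, since $\mathcal A\cap\mathscr C_T\subset\mathcal A$. This requires the intersection to be nonempty, and we check that first. The one-point mm-space $*$ satisfies $*\prec X$ for every $X\in\mathcal X$, because the constant map is measure preserving and $1$-Lipschitz. Since $\mathcal A$ is nonempty and downward, $*\in\mathcal A$. Moreover $*\in\mathscr C_T$, realized as a Dirac measure on $[0,D]^m$ restricted to its support. Thus $\mathcal A\cap\mathscr C_T\neq\varnothing$.

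For the reverse inequality, put $r\coloneqq\Box(T,\mathcal A)\leq1$. If $r=1$, the one-point space already gives $\Box(T,*)\leq1=r$, so it lies in $\mathcal A\cap\mathscr C_T$ and realizes the infimum. Suppose $r<1$ and fix $\varepsilon$ with $r<\varepsilon<1$. Choose $X\in\mathcal A$ with $\Box(T,X)<\varepsilon$. By \eqref{eq:succ-box}, and using the symmetry of $\Box$, we get $\Box_{\succ}(X,T)\leq\Box(X,T)<\varepsilon$. \Cref{thm:finite-realization} then yields $Z\in\mathscr C_T$ with $Z\prec X$ and $\Box(T,Z)\leq\varepsilon$. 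Downward closure puts $Z\in\mathcal A$. Hence
\[
 \inf_{Z\in\mathcal A\cap\mathscr C_T}\Box(T,Z)\leq\varepsilon,
\]
and letting $\varepsilon\downarrow r$ gives equality of the infima.

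It remains to show the infimum is attained, which is the main point. Take $Z_n\in\mathcal A\cap\mathscr C_T$ with $\Box(T,Z_n)\to r$. Write $Z_n$ as the support restriction of $([0,D]^m,\|\cdot\|_\infty,\nu_n)$. Because Borel probability measures on the compact cube are weakly compact, a subsequence satisfies $\nu_n\to\nu$ in the Prohorov distance. By \eqref{eq:fixed-base-box}, the corresponding mm-spaces then converge in Box to $Z\coloneqq([0,D]^m,\|\cdot\|_\infty,\nu)$, restricted to $\supp\nu$; this is the Box-compactness of $\mathscr C_T$ already noted. The limit $Z$ lies in $\mathscr C_T$ by construction, and in $\mathcal A$ because $\mathcal A$ is Box-closed. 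Continuity of $\Box(T,\cdot)$ follows from the triangle inequality and gives $\Box(T,Z)=r$. So the minimum in \eqref{eq:compact-attainment} is attained, which completes the plan.

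The only delicate point is the convention that members of $\mathscr C_T$ are restricted to their supports. One must check that \eqref{eq:fixed-base-box} applies with this convention, and the preliminaries state explicitly that it does.
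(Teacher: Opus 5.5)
Your proof is correct and uses the same ingredients as the paper: the one-point space for nonemptiness and for the case $r=1$, \Cref{thm:finite-realization} with downward closure to produce near-optimal members of $\mathcal A\cap\mathscr C_T$, and Box-compactness of $\mathscr C_T$ together with Box-closedness of $\mathcal A$ for attainment. The only difference is organizational: you first prove that the two infima are equal and then take a separate minimizing sequence in $\mathcal A\cap\mathscr C_T$, whereas the paper applies compactness directly to the realizations $Z_j$ satisfying $\Box(T,Z_j)\leq\varepsilon_j\to r$.
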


\begin{proof}
Put $r\coloneqq\Box(T,\mathcal A)$. The one-point mm-space belongs to $\mathcal A$ and has a realization in $\mathscr C_T$. Thus $\mathcal A\cap\mathscr C_T$ is nonempty. If $r=1$, this one-point space realizes the minimum because the Box distance is at most $1$.

Assume that $r<1$. Choose $X_j\in\mathcal A$ and numbers $\varepsilon_j<1$ such that
\[
 \Box(T,X_j)\longrightarrow r,
 \qquad
 \Box_{\succ}(X_j,T)<\varepsilon_j,
 \qquad
 \varepsilon_j\longrightarrow r
\]
as $j\to\infty$. By \Cref{thm:finite-realization}, there is $Z_j\in\mathscr C_T$ such that $Z_j\prec X_j$ and $\Box(T,Z_j)\leq\varepsilon_j$. Since $\mathcal A$ is downward, $Z_j\in\mathcal A$. Compactness of $\mathscr C_T$ gives a subsequence converging in the Box distance to some $Z\in\mathscr C_T$. The closedness of $\mathcal A$ gives $Z\in\mathcal A$, and
\[
 r\leq\Box(T,Z)=\lim_{j\to\infty}\Box(T,Z_j)
 \leq\lim_{j\to\infty}\varepsilon_j=r.
\]
Thus $Z$ realizes the minimum in \eqref{eq:compact-attainment}. This completes the proof.
\end{proof}

\begin{theorem}\label{thm:coordinates-weak}
For pyramids $\mathcal P_n$ and $\mathcal P$, the following conditions are equivalent.
\begin{enumerate}
 \item The sequence $\mathcal P_n$ converges weakly to $\mathcal P$ as $n\to\infty$.
 \item For every $X\in\mathcal X$, one has $\Box(X,\mathcal P_n)\to\Box(X,\mathcal P)$ as $n\to\infty$.
 \item For every finite mm-space $T$, one has $\Box(T,\mathcal P_n)\to\Box(T,\mathcal P)$ as $n\to\infty$.
\end{enumerate}
\end{theorem}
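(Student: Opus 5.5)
I would prove the cycle (1)$\Rightarrow$(2)$\Rightarrow$(3)$\Rightarrow$(1). The implication (2)$\Rightarrow$(3) is immediate, since finite mm-spaces are members of $\mathcal X$. The work is in the other two implications.

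For (1)$\Rightarrow$(2), fix $X\in\mathcal X$ and put $r\coloneqq\Box(X,\mathcal P)$.

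\emph{Upper bound.} Choose $Y\in\mathcal P$ with $\Box(X,Y)<r+\delta$. Weak convergence gives $\Box(Y,\mathcal P_n)\to0$. The $1$-Lipschitz property of $X\mapsto\Box(X,\mathcal A)$ then gives $\limsup_n\Box(X,\mathcal P_n)\leq r+\delta$.

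\emph{Lower bound.} This is the harder direction for general $X$. I would go through the compactness of $\Pi$ and the fact that weak limits are unique (Painlev\'e--Kuratowski limits are unique). It suffices to show that every subsequence has a further subsequence along which $\liminf\Box(X,\mathcal P_n)\geq r$. Suppose instead that $Y_n\in\mathcal P_n$ satisfy $\Box(X,Y_n)\leq r-\eta$ along a subsequence. The $Y_n$ need not converge, so I would replace $X$ by a finite approximation $T$ with $\Box(X,T)<\eta/4$. By \Cref{thm:compact-attainment}, I can pick $Z_n\in\mathcal P_n\cap\mathscr C_T$ with $\Box(T,Z_n)=\Box(T,\mathcal P_n)\leq r-\eta+\eta/4$. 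Compactness of $\mathscr C_T$ gives a further subsequence $Z_n\to Z$ in Box. The lower (liminf) half of weak convergence forces $Z\in\mathcal P$: if $Z\notin\mathcal P$, then $\liminf\Box(Z,\mathcal P_n)>0$, contradicting $\Box(Z,\mathcal P_n)\leq\Box(Z,Z_n)\to0$. Then
\[
 r\leq\Box(X,Z)\leq\eta/4+\Box(T,Z)\leq r-\eta/2,
\]
a contradiction. So the finite reduction together with \Cref{thm:compact-attainment} handles general $X$, with no appeal to compactness of $\Pi$.

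For (3)$\Rightarrow$(1), I would first upgrade (3) to all $X$ by the same density and Lipschitz argument: $|\Box(X,\mathcal A)-\Box(T,\mathcal A)|\leq\Box(X,T)$ uniformly in $\mathcal A$, and finite spaces are dense. So (3) implies (2). Now (2) gives weak convergence directly:
\begin{itemize}
 \item if $X\in\mathcal P$, then $\Box(X,\mathcal P_n)\to\Box(X,\mathcal P)=0$;
 \item if $X\notin\mathcal P$, then $\Box(X,\mathcal P)>0$ because $\mathcal P$ is Box-closed, so $\liminf_n\Box(X,\mathcal P_n)=\Box(X,\mathcal P)>0$.
\end{itemize}

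The main obstacle is the lower semicontinuity in (1)$\Rightarrow$(2). Sets of spaces near $X$ are not compact, so I reduce to a finite $T$ and invoke \Cref{thm:compact-attainment} to extract a convergent subsequence inside the compact class $\mathscr C_T$. The rest is routine $1$-Lipschitz bookkeeping.
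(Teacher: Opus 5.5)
Your proof is correct and follows the paper's argument. It uses the same upper bound, the same combination of \Cref{thm:compact-attainment}, compactness of $\mathscr C_T$, and the exclusion half of weak convergence for the lower bound, and the same density and $1$-Lipschitz step for (3)$\Rightarrow$(2)$\Rightarrow$(1). The differences are cosmetic: you fold the finite approximation of $X$ into the contradiction argument rather than first proving the lower bound for finite $T$ and extending by the $2\Box(X,T)$ estimate, and your opening mention of compactness of $\Pi$ is never used and can be deleted.
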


\begin{proof}
Assume (1), and fix $X\in\mathcal X$. For every $Z\in\mathcal P$, weak convergence gives $\Box(Z,\mathcal P_n)\to0$ as $n\to\infty$. Therefore,
\begin{equation}\label{eq:coordinate-upper}
 \limsup_{n\to\infty}\Box(X,\mathcal P_n)\leq\Box(X,\mathcal P).
\end{equation}

We prove the reverse bound first when $X=T$ is finite. By \Cref{thm:compact-attainment}, choose
\[
 Z_n\in\mathcal P_n\cap\mathscr C_T,
 \qquad
 \Box(T,Z_n)=\Box(T,\mathcal P_n).
\]
If the lower bound
\[
 \Box(T,\mathcal P)\leq\liminf_{n\to\infty}\Box(T,\mathcal P_n)
\]
failed, there would be a subsequence $(n_\ell)$ such that
\[
 \lim_{\ell\to\infty}\Box(T,\mathcal P_{n_\ell})=\liminf_{n\to\infty}\Box(T,\mathcal P_n)<\Box(T,\mathcal P).
\]
Compactness of $\mathscr C_T$ gives a further subsequence $(n_{\ell_r})$ such that $Z_{n_{\ell_r}}\to Z$ in the Box distance as $r\to\infty$. Since
\[
 \Box(Z,\mathcal P_{n_{\ell_r}})\leq\Box(Z,Z_{n_{\ell_r}})\longrightarrow0\quad\text{as }r\to\infty,
\]
the exclusion condition in weak convergence implies $Z\in\mathcal P$. It follows that
\[
 \Box(T,\mathcal P)\leq\Box(T,Z)=\lim_{r\to\infty}\Box(T,Z_{n_{\ell_r}})
 =\lim_{r\to\infty}\Box(T,\mathcal P_{n_{\ell_r}})=\liminf_{n\to\infty}\Box(T,\mathcal P_n),
\]
contradicting the choice of $(n_\ell)$. Thus the lower bound holds.
Together with \eqref{eq:coordinate-upper}, this proves (2) for finite $X$. Finite Box-density and the uniform $1$-Lipschitz bound
\[
|\Box(X,\mathcal A)-\Box(Y,\mathcal A)|\leq\Box(X,Y)
\]
for nonempty $\mathcal A\subset\mathcal X$ give, for every finite mm-space $T$,
\[
 |\Box(X,\mathcal P_n)-\Box(X,\mathcal P)|\leq 2\Box(X,T)+|\Box(T,\mathcal P_n)-\Box(T,\mathcal P)|.
\]
Choose finite $T$ with $\Box(X,T)$ arbitrarily small, then let $n\to\infty$. This extends the convergence to every $X\in\mathcal X$.

Condition (2) implies (1) by the definition of weak convergence. If $X\in\mathcal P$, the limiting coordinate is zero. If $X\notin\mathcal P$, closedness gives $\Box(X,\mathcal P)>0$. Condition (2) immediately implies (3), and finite Box-density with the same uniform $1$-Lipschitz bound shows that (3) implies (2). This completes the proof.
\end{proof}

\begin{definition}\label{def:finite-domination-metric}
For nonempty Box-closed downward families $\mathcal A,\mathcal B\subset\mathcal X$, we use the metric $d_{\mathrm{fin}}$ defined in \eqref{eq:intro-dfin}.
\end{definition}

Nakajima--Shioya compactify unmeasured metric-space pyramids by considering, for each positive integer $k$, finite metric spaces with at most $k$ points and diameter at most $k$. They compare these classes using the Hausdorff distances induced by the Gromov--Hausdorff distance and aggregate those distances with summable weights. For each finite mm-space $T$, $d_{\mathrm{fin}}$ instead compares $|\Box(T,\mathcal A)-\Box(T,\mathcal B)|$ after division by $\#T+\diam T$ \cite{nakajima-shioya2024compactification}.

For each positive integer $k$, the compact class of finite mm-spaces $T$ with at most $k$ points and diameter at most $k$ has a finite net that uniformly controls the differences between $\Box(T,\mathcal A)$ and $\Box(T,\mathcal B)$ on this class. For finite mm-spaces $T$ outside this class, the normalized absolute difference $|\Box(T,\mathcal A)-\Box(T,\mathcal B)|/(\#T+\diam T)$ satisfies
\begin{equation}\label{eq:metric-tail}
 \sup\left\{
 \frac{|\Box(T,\mathcal A)-\Box(T,\mathcal B)|}{\#T+\diam T}
 \mathrel{}\middle|\mathrel{}T\text{ finite},\ \#T>k\text{ or }\diam T>k
 \right\}\leq\frac1k.
\end{equation}

\begin{proposition}\label{prop:metric-topology}
On $\mathcal D$, $d_{\mathrm{fin}}$ is a metric and induces the initial topology defined in \Cref{sec:introduction}. On $\Pi$, this topology is the weak topology.
\end{proposition}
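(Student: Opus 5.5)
Metric axioms first, then the topology comparison via \eqref{eq:metric-tail} and finite nets, and finally \Cref{thm:coordinates-weak} for the pyramid statement.

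\emph{Metric axioms.} Since $0\le\Box(T,\cdot)\le1$ and $\#T+\diam T\ge1$, each normalized difference lies in $[0,1]$, so $d_{\mathrm{fin}}$ is finite. Symmetry and the triangle inequality hold termwise for every $T$, and pass to the supremum. For definiteness, assume $d_{\mathrm{fin}}(\mathcal A,\mathcal B)=0$. Then $\Box(T,\mathcal A)=\Box(T,\mathcal B)$ for every finite $T$. Using finite Box-density and the $1$-Lipschitz bound $|\Box(X,\mathcal A)-\Box(Y,\mathcal A)|\le\Box(X,Y)$ from the proof of \Cref{thm:coordinates-weak}, this extends to $\Box(X,\mathcal A)=\Box(X,\mathcal B)$ for all $X\in\mathcal X$. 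Since both families are Box-closed, $X\in\mathcal A$ holds iff $\Box(X,\mathcal A)=0$ iff $\Box(X,\mathcal B)=0$ iff $X\in\mathcal B$. Hence $\mathcal A=\mathcal B$.

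\emph{Metric topology is finer than the initial topology.} For each fixed $T$, the coordinate $\mathcal A\mapsto\Box(T,\mathcal A)$ satisfies $|\Box(T,\mathcal A)-\Box(T,\mathcal B)|\le(\#T+\diam T)\,d_{\mathrm{fin}}(\mathcal A,\mathcal B)$. It is therefore Lipschitz, hence continuous, for $d_{\mathrm{fin}}$. So every subbasic set of the initial topology is $d_{\mathrm{fin}}$-open.

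\emph{Initial topology is finer than the metric topology.} This is the main step. Fix $\mathcal A\in\mathcal D$ and $\eta>0$, and choose an integer $k$ with $1/k<\eta/2$. By \eqref{eq:metric-tail}, finite $T$ with $\#T>k$ or $\diam T>k$ contribute at most $1/k$ for every $\mathcal B$.

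Let $\mathcal K_k$ be the Box-compact class of finite mm-spaces with at most $k$ points and diameter at most $k$ \cite[Theorem~4.27]{shioya2016mmg}. Pick a finite $\delta$-net $T_1,\dots,T_p$ of $\mathcal K_k$ with $\delta=\eta/8$, and consider the initial-topology neighborhood
\[
 \mathcal U=\{\mathcal B\in\mathcal D\mid|\Box(T_j,\mathcal B)-\Box(T_j,\mathcal A)|<\eta/8\ \text{for } j=1,\dots,p\}.
\]
Let $\mathcal B\in\mathcal U$ and $T\in\mathcal K_k$, and choose $T_j$ with $\Box(T,T_j)<\delta$. The $1$-Lipschitz bound in the first variable, applied to both families, gives $|\Box(T,\mathcal A)-\Box(T,\mathcal B)|\le2\delta+\eta/8<\eta/2$. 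Dividing by $\#T+\diam T\ge1$ keeps this below $\eta/2$.

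Combining the two ranges of $T$ gives $d_{\mathrm{fin}}(\mathcal A,\mathcal B)\le\max\{\eta/2,1/k\}<\eta$ on $\mathcal U$. The metric ball is therefore a neighborhood in the initial topology, and the two topologies coincide on $\mathcal D$. Note that the Lipschitz bound is uniform in the family, so no attainment is needed here.

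\emph{Restriction to $\Pi$.} The metric topology is first countable, so on $\Pi$ it is determined by sequences. By the equivalence just proved, $d_{\mathrm{fin}}(\mathcal P_n,\mathcal P)\to0$ iff all finite coordinates converge. By \Cref{thm:coordinates-weak}, (1)$\Leftrightarrow$(3), this holds iff $\mathcal P_n\to\mathcal P$ weakly.

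The weak topology is defined through sequential convergence and is metrizable \cite[Theorem~6.22]{shioya2016mmg}, so it is also determined by its convergent sequences. Two first-countable topologies with the same convergent sequences coincide, so $d_{\mathrm{fin}}$ induces the weak topology on $\Pi$.

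The only delicate point is the uniform control on $\mathcal K_k$. It rests on compactness of $\mathcal K_k$ together with the family-independent $1$-Lipschitz estimate, and both are available from earlier results.
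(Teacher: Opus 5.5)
Your proposal is correct and follows the paper's proof essentially step for step. Definiteness comes from finite Box-density and the family-independent $1$-Lipschitz bound; the two topology inclusions come from the coordinate bound $|\Box(T,\mathcal A)-\Box(T,\mathcal B)|\leq(\#T+\diam T)\,d_{\mathrm{fin}}(\mathcal A,\mathcal B)$ in one direction and a finite net in the compact class together with \eqref{eq:metric-tail} in the other; and the pyramid statement comes from metrizability of the weak topology plus \Cref{thm:coordinates-weak}. The only differences are cosmetic: your constants ($\delta=\eta/8$, $1/k<\eta/2$) land you in the ball of radius $\eta$ rather than the paper's $3\eta$, and you remark explicitly that $d_{\mathrm{fin}}\leq 1$ is finite.
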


\begin{proof}
The absolute difference of two real-valued coordinates is symmetric and satisfies the triangle inequality; dividing by $\#T+\diam T$ and taking the supremum over finite $T$ gives these properties for $d_{\mathrm{fin}}$. If $d_{\mathrm{fin}}(\mathcal A,\mathcal B)=0$, then $\Box(T,\mathcal A)=\Box(T,\mathcal B)$ for every finite $T$. For any $X\in\mathcal X$, choose finite $T_j$ with $\Box(X,T_j)\to0$ as $j\to\infty$. The $1$-Lipschitz bound for distance coordinates gives
\[
 \begin{aligned}
 |\Box(X,\mathcal A)-\Box(X,\mathcal B)|&\leq2\Box(X,T_j)+|\Box(T_j,\mathcal A)-\Box(T_j,\mathcal B)|\\
 &=2\Box(X,T_j)\longrightarrow0\quad\text{as }j\to\infty.
 \end{aligned}
\]
Thus the distance functions of $\mathcal A$ and $\mathcal B$ agree on all of $\mathcal X$. A closed set is the zero set of its distance function, and hence $\mathcal A=\mathcal B$.

We compare the two topologies without restricting to sequences. Fix $\mathcal A$ and $\eta>0$. Choose a positive integer $k$ such that $1/k<\eta$. The class of finite mm-spaces with at most $k$ points and diameter at most $k$ is compact, so take a finite $\eta$-net $\{T_1,\ldots,T_N\}$ in this class. If a nonempty Box-closed downward family $\mathcal B$ satisfies
\[
 |\Box(T_j,\mathcal A)-\Box(T_j,\mathcal B)|<\eta
 \qquad (j=1,\ldots,N),
\]
then, for every finite $T$ with $\#T\leq k$ and $\diam T\leq k$, choose $T_j$ with $\Box(T,T_j)\leq\eta$. For each fixed $\mathcal A$, the map $T\mapsto\Box(T,\mathcal A)$ is $1$-Lipschitz for the Box distance, so
\[
 \begin{aligned}
 |\Box(T,\mathcal A)-\Box(T,\mathcal B)|
 &\leq|\Box(T,\mathcal A)-\Box(T_j,\mathcal A)|+|\Box(T_j,\mathcal A)-\Box(T_j,\mathcal B)|\\
 &\qquad+|\Box(T_j,\mathcal B)-\Box(T,\mathcal B)|\\
 &\leq2\Box(T,T_j)+|\Box(T_j,\mathcal A)-\Box(T_j,\mathcal B)|<3\eta.
 \end{aligned}
\]
The strict margin is uniform over the finite net. Since $\#T+\diam T\geq1$ and the normalized contribution from finite mm-spaces $T$ with $\#T>k$ or $\diam T>k$ is at most $1/k<\eta$ by \eqref{eq:metric-tail}, the neighborhood defined by these finitely many coordinates is contained in the $d_{\mathrm{fin}}$-ball of radius $3\eta$. Conversely, for each finite $T$,
\[
 |\Box(T,\mathcal A)-\Box(T,\mathcal B)|
 \leq(\#T+\diam T)d_{\mathrm{fin}}(\mathcal A,\mathcal B).
\]
Thus these topologies coincide.

The weak topology on $\Pi$ is metrizable by \cite[Theorem~6.22]{shioya2016mmg}. Both it and the topology of $d_{\mathrm{fin}}$ are therefore sequential. By \Cref{thm:coordinates-weak}, their convergent sequences coincide, so the two topologies coincide. This completes the proof.
\end{proof}

\section{Hyperspace topology and compactification}\label{sec:hyperspace}

The Fell topology records whether a closed set hits an open set and misses a compact set, while the Wijsman topology on $(\mathcal X,\Box)$ records point-to-set Box distances. For a finite mm-space $T$, the class $\mathscr C_T$ from Section~\ref{sec:finite-metric} consists of the mm-spaces defined by probability measures on the cube $[0,\diam T]^{\#T}$ with its supremum metric, restricted to their supports. For $\mathcal A\in\mathcal D$ and $r\geq0$, \Cref{thm:compact-attainment} says that $\Box(T,\mathcal A)$ is attained in $\mathcal A\cap\mathscr C_T$. Thus $\Box(T,\mathcal A)>r$ is equivalent to missing the compact set $K_{T,r}\coloneqq\mathscr C_T\cap\{Z\in\mathcal X\mid\Box(T,Z)\leq r\}$, while $\Box(T,\mathcal A)<r$ is equivalent to hitting the open ball $B_{\Box}(T,r)\coloneqq\{Z\in\mathcal X\mid\Box(T,Z)<r\}$. These conditions make every finite-mm-space distance coordinate Fell-continuous; Box-density of finite mm-spaces extends this to all point-to-set Box coordinates and gives the topology comparison. The same open-hit and compact-miss construction separates a nonempty closed family that fails downward closure, proving that $\mathcal D$ is closed in the ambient hyperspace of nonempty closed subsets of $(\mathcal X,\Box)$. On pyramids, the resulting topology is the weak topology, and the known compactness of the weak pyramid space and density of associated pyramids give the completion statement.

\begin{definition}[Fell topology {\cite[§5.1, Definition~5.1.1]{beer1993topologies}}]
Let $(M,d)$ be a metric space, and set $F(M)\coloneqq\{A\subset M\mid A\text{ is closed}\}$, including the empty set, and $F_+(M)\coloneqq F(M)\setminus\{\varnothing\}$. The \emph{Fell topology} on $F(M)$ is generated by the subbasis
\[
 \{A\in F(M)\mid A\cap U\neq\varnothing\},\qquad
 \{A\in F(M)\mid A\cap K=\varnothing\},
\]
where $U\subset M$ is open and $K\subset M$ is compact.
\end{definition}

\begin{definition}[Wijsman topology {\cite[Chapter~2, §2.1]{beer1993topologies}}]
For $x\in M$ and $A\in F_+(M)$, set $d(x,A)\coloneqq\inf_{a\in A}d(x,a)$. The \emph{Wijsman topology} on $F_+(M)$ is the initial topology of the maps $A\mapsto d(x,A)$, one for each $x\in M$.
\end{definition}

\begin{definition}[Restriction of the Wijsman topology]
The Wijsman topology on the metric space $(\mathcal X,\Box)$, restricted to $\mathcal D$, is the initial topology of the maps $f_X\colon\mathcal D\to\mathbb R$ given by
\[
 f_X(\mathcal A)\coloneqq\Box(X,\mathcal A)=\inf_{Y\in\mathcal A}\Box(X,Y),\qquad X\in\mathcal X.
\]
\end{definition}

\begin{theorem}\label{thm:hyperspace-topologies}
On the nonempty Box-closed downward families, the relative Fell topology, the restriction to this domain of the Wijsman topology on $(\mathcal X,\Box)$, and the topology induced by $d_{\mathrm{fin}}$ coincide.
\end{theorem}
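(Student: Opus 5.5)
We show that each of the three topologies on $\mathcal D$ coincides with the initial topology of the finite coordinates $\mathcal A\mapsto\Box(T,\mathcal A)$, $T$ finite. By \Cref{prop:metric-topology}, that initial topology is the topology of $d_{\mathrm{fin}}$. So it suffices to compare the relative Fell topology and the restricted Wijsman topology with it.

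\emph{Wijsman versus finite coordinates.} The restricted Wijsman topology is generated by all $f_X$, including the finite ones, so it is finer than the finite-coordinate topology. For the converse, we show that every $f_X$ is continuous for the finite-coordinate topology. Fix $X$, a point $\mathcal A\in\mathcal D$, and $\eta>0$. Choose finite $T$ with $\Box(X,T)<\eta$. The uniform $1$-Lipschitz bound gives
\[
 |f_X(\mathcal B)-f_X(\mathcal A)|\le 2\eta+|\Box(T,\mathcal B)-\Box(T,\mathcal A)|.
\]
Hence the basic neighborhood $\{\mathcal B\mid|\Box(T,\mathcal B)-\Box(T,\mathcal A)|<\eta\}$ maps into the $3\eta$-interval around $f_X(\mathcal A)$. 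This argument uses only nonemptiness, not nets or sequences.

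\emph{Fell is finer than finite coordinates.} Fix finite $T$ and $r\ge0$. First,
\[
 \{\Box(T,\mathcal A)<r\}=\{\mathcal A\cap B_\Box(T,r)\ne\varnothing\}.
\]
This is a hit set for an open ball, so it is Fell-open. Second, by \Cref{thm:compact-attainment} the infimum $\Box(T,\mathcal A)$ is attained on $\mathcal A\cap\mathscr C_T$. Therefore $\Box(T,\mathcal A)>r$ if and only if $\mathcal A\cap K_{T,r}=\varnothing$, where $K_{T,r}=\mathscr C_T\cap\{\Box(T,\cdot)\le r\}$. The set $K_{T,r}$ is a closed subset of the Box-compact set $\mathscr C_T$, hence compact. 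So both $\{\Box(T,\cdot)<r\}$ and $\{\Box(T,\cdot)>r\}$ are Fell-open, and each finite coordinate is Fell-continuous.

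\emph{Finite coordinates are finer than Fell.} By the previous step this reduces to the Wijsman topology, so we check the two subbasic Fell sets against the Wijsman topology.

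For a hit set, take open $U$ and $\mathcal A\cap U\ni Y$. Pick $\delta$ with $B_\Box(Y,2\delta)\subset U$. Then $\{\mathcal B\mid f_Y(\mathcal B)<\delta\}$ is a Wijsman neighborhood of $\mathcal A$ contained in the hit set.

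For a miss set, take compact $K$ with $\mathcal A\cap K=\varnothing$. The function $f\colon Z\mapsto\Box(Z,\mathcal A)$ is continuous and positive on $K$, so it has a minimum $3\delta>0$ there. Cover $K$ by finitely many balls $B_\Box(Z_i,\delta)$ with $Z_i\in K$. Consider the neighborhood
\[
 \{\mathcal B\mid f_{Z_i}(\mathcal B)>2\delta\ \text{for all } i\}.
\]
It contains $\mathcal A$, since $f_{Z_i}(\mathcal A)\ge3\delta$. If $\mathcal B$ lies in it, then for each $Z\in K$ with $\Box(Z,Z_i)<\delta$ we get $\Box(Z,\mathcal B)>\delta$, so $\mathcal B$ misses $K$. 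This is the standard fact that on a metric space the Wijsman topology is finer than the Fell topology. It holds for arbitrary closed sets and needs no downward closure.

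The main obstacle is the compact-miss direction in the third step, namely showing that finite coordinates control avoidance of an arbitrary compact $K$. This only works because we first pass through the Wijsman coordinates at the finitely many points $Z_i\in K$; those need not be finite spaces, and the Wijsman-to-finite reduction of the first step then absorbs them. Downward closure enters only through attainment in the second step: without it, Wijsman could be strictly finer than Fell, since a set could approach $T$ only through noncompact regions.
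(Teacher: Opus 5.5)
Your proof is correct and follows essentially the paper's argument: attainment of $\Box(T,\mathcal A)$ on $\mathcal A\cap\mathscr C_T$ turns $\Box(T,\mathcal A)<r$ and $\Box(T,\mathcal A)>r$ into open-hit and compact-miss conditions, the standard finite-cover argument shows Wijsman is finer than Fell, and finite Box-density with the uniform $1$-Lipschitz bound identifies the full Wijsman coordinates with the finite ones, whose topology is that of $d_{\mathrm{fin}}$ by \Cref{prop:metric-topology}. The only difference is bookkeeping: you route every comparison through the finite-coordinate topology, whereas the paper routes them through the relative Fell topology, observing that each $\mathcal A\mapsto\Box(X,\mathcal A)$ is a uniform limit of Fell-continuous finite coordinates.
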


\begin{proof}
Fix a finite mm-space $T$ and $r\geq0$, and use $B_{\Box}(T,r)$ and $K_{T,r}$ as defined at the start of this section.
The set $K_{T,r}$ is compact. By \Cref{thm:compact-attainment}, every nonempty Box-closed downward family $\mathcal A$ satisfies
\begin{align*}
 \Box(T,\mathcal A)>r&\quad\Longleftrightarrow\quad \mathcal A\cap K_{T,r}=\varnothing,\\
 \Box(T,\mathcal A)<r&\quad\Longleftrightarrow\quad \mathcal A\cap B_{\Box}(T,r)\neq\varnothing.
\end{align*}
The right-hand sides are respectively a compact-miss condition and an open-hit condition. Thus, for every finite mm-space $T$, the map $\mathcal A\mapsto\Box(T,\mathcal A)$ is continuous in the relative Fell topology.

For any metric space $(M,d)$ and nonempty closed $A\subset M$, let $A'$ range over the nonempty closed subsets of $M$. The Fell subbasis consists of open-hit sets and compact-miss sets. If $x\in A\cap U$ for an open $U\subset M$, choose $r>0$ with $B(x,r)\subset U$. The Wijsman-open condition $d(x,A')<r$ contains $A$ and forces $A'\cap U\ne\varnothing$. If $K\subset M$ is compact and disjoint from $A$, then $\delta\coloneqq\inf_{x\in K}d(x,A)>0$; the case $K=\varnothing$ is automatic. Choose a finite $\delta/4$-net $\{x_1,\ldots,x_m\}$ in $K$. The finite intersection of Wijsman-open coordinate conditions $d(x_j,A')>\delta/2$ contains $A$. If some $y\in A'\cap K$ existed, a net point $x_j$ with $d(x_j,y)<\delta/4$ would give $d(x_j,A')<\delta/4$, a contradiction. Thus this finite intersection forces $A'\cap K=\varnothing$. Every Fell subbasic neighborhood is Wijsman-open, so the Fell topology is weaker than the Wijsman topology.

For an arbitrary $X\in\mathcal X$, choose finite mm-spaces $T_j$ such that $\Box(T_j,X)\to0$ as $j\to\infty$. For every nonempty $\mathcal A\subset\mathcal X$,
\begin{equation}\label{eq:uniform-coordinate-density}
 |\Box(T_j,\mathcal A)-\Box(X,\mathcal A)|\leq\Box(T_j,X).
\end{equation}
The bound is independent of $\mathcal A$. Hence, for every $X\in\mathcal X$, the map $\mathcal A\mapsto\Box(X,\mathcal A)$ is a uniform limit on $\mathcal D$ of Fell-continuous maps $\mathcal A\mapsto\Box(T_j,\mathcal A)$ and is Fell-continuous. The restriction to $\mathcal D$ of the Wijsman topology on $(\mathcal X,\Box)$ is therefore weaker than the relative Fell topology. The subbasis argument above gives the reverse inclusion, so these two relative topologies coincide.

By \eqref{eq:uniform-coordinate-density}, the maps $\mathcal A\mapsto\Box(X,\mathcal A)$ for all $X\in\mathcal X$ and the maps $\mathcal A\mapsto\Box(T,\mathcal A)$ for finite mm-spaces $T$ induce the same topology. This topology is induced by $d_{\mathrm{fin}}$ by \Cref{prop:metric-topology}. This completes the proof.
\end{proof}

\begin{corollary}\label{cor:hyperspace-embedding}
The inclusion of $\Pi$ into $F_+(\mathcal X)$ is a topological embedding for the ambient Fell and Wijsman topologies.
\end{corollary}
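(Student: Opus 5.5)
The plan is to get the corollary from \Cref{thm:hyperspace-topologies} by restriction; the only extra ingredients are injectivity of the inclusion and the fact that restricting a topology to a subspace in stages gives the same result. First, every pyramid is a nonempty Box-closed subset of $\mathcal X$, so $\Pi\subset\mathcal D\subset F_+(\mathcal X)$, and the inclusion is injective. A topological embedding means that the topology on $\Pi$ coincides with the subspace topology induced by $F_+(\mathcal X)$. Here the topology on $\Pi$ is the weak topology, which is the topology of interest in \cite[Question~9.5]{kazukawa-nakajima-shioya2024topological}.

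Fix either ambient topology, Fell or Wijsman, on $F_+(\mathcal X)$. Relative topologies are transitive: the subspace topology that $\Pi$ inherits from $F_+(\mathcal X)$ equals the subspace topology $\Pi$ inherits from $\mathcal D$, where $\mathcal D$ carries its own subspace topology from $F_+(\mathcal X)$.

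For the Wijsman case one point needs care. The paper defines the restricted Wijsman topology on $\mathcal D$ as the initial topology of the maps $f_X$. This agrees with the subspace topology, since the subspace topology of an initial topology is the initial topology of the restricted maps. The Fell topology is defined on $F(\mathcal X)$ rather than $F_+(\mathcal X)$, but restricting it first to $F_+(\mathcal X)$ and then to $\mathcal D$ is again a single subspace restriction, so the same transitivity applies.

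By \Cref{thm:hyperspace-topologies}, both relative topologies on $\mathcal D$ equal the $d_{\mathrm{fin}}$-topology. Restricting further to $\Pi$ therefore gives the $d_{\mathrm{fin}}$-topology on $\Pi$. By \Cref{prop:metric-topology}, that topology is the weak topology. Hence, for each ambient topology, the subspace topology on $\Pi$ is the weak topology, and the inclusion is a homeomorphism onto its image, that is, an embedding.

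I do not expect a real obstacle. The only points to check are the identification of the initial-topology definition of the restricted Wijsman topology with the subspace topology, and the fact that the Fell topology lives on $F(\mathcal X)$ rather than on $F_+(\mathcal X)$. Both are handled by transitivity of subspace topologies as above.
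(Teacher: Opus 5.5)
Your proposal is correct and follows the paper's own proof: you restrict the coincidence of topologies on $\mathcal D$ from \Cref{thm:hyperspace-topologies} to $\Pi\subset\mathcal D$, and you identify the result with the weak topology via \Cref{prop:metric-topology}. Your remarks on transitivity of subspace topologies, and on matching the initial-topology definition of the restricted Wijsman topology with the subspace topology, spell out points the paper leaves implicit.
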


\begin{proof}
By Theorem~\ref{thm:hyperspace-topologies} and Proposition~\ref{prop:metric-topology}, both relative hyperspace topologies on $\Pi$ coincide with the weak topology of pyramids. This completes the proof.
\end{proof}

The next theorem concerns this ambient hyperspace; its proof uses only downward closure, not directedness.

\begin{theorem}\label{thm:downward-fell-closed}
The subset $\mathcal D$ is closed in $F_+(\mathcal X)$ for both the ambient Fell topology and the ambient Wijsman topology on $(\mathcal X,\Box)$.
\end{theorem}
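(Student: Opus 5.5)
Since the Fell topology is weaker than the Wijsman topology on $F_+(\mathcal X)$ (the subbasis argument in the proof of \Cref{thm:hyperspace-topologies} holds in any metric space), a Fell-closed set is automatically Wijsman-closed. So I will only prove that the complement $F_+(\mathcal X)\setminus\mathcal D$ is Fell-open. Take a nonempty closed $\mathcal A\subset\mathcal X$ that is not downward. Then there are $X\in\mathcal A$ and $Y\prec X$ with $Y\notin\mathcal A$. I will construct a Fell neighborhood of $\mathcal A$, made of one open-hit condition and one compact-miss condition, that contains no downward family.

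The first step is to reduce to a finite test space. Since $\mathcal A$ is closed, $\delta\coloneqq\Box(Y,\mathcal A)>0$. By finite Box-density choose a finite $T$ with $\Box(Y,T)<\delta/4$. Then $\Box(T,\mathcal A)>3\delta/4$, and \eqref{eq:succ-triangle}, \eqref{eq:succ-zero}, \eqref{eq:succ-box} give
\[
 \Box_{\succ}(X,T)\leq\Box_{\succ}(X,Y)+\Box_{\succ}(Y,T)\leq\Box(Y,T)<\delta/4.
\]
Next I use the stability of $\succ_\varepsilon$ to pass to an open set. The set $U\coloneqq\{X'\in\mathcal X\mid\Box_{\succ}(X',T)<\delta/4\}$ is Box-open: by \eqref{eq:succ-triangle} and \eqref{eq:succ-box}, $\Box_{\succ}(X',T)\leq\Box(X',X)+\Box_{\succ}(X,T)$, so a small Box ball around each point of $U$ stays in $U$. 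Also $X\in U\cap\mathcal A$, so the open-hit condition $\mathcal A'\cap U\neq\varnothing$ holds at $\mathcal A'=\mathcal A$. For the compact-miss condition, set $K\coloneqq K_{T,\delta/4}=\mathscr C_T\cap\{Z\mid\Box(T,Z)\leq\delta/4\}$, which is compact. It is disjoint from $\mathcal A$ because $\Box(T,\mathcal A)>3\delta/4$.

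It remains to show that the neighborhood $\{\mathcal A'\mid\mathcal A'\cap U\neq\varnothing,\ \mathcal A'\cap K=\varnothing\}$ contains no downward family. Suppose $\mathcal A'$ is downward and contains some $X'\in U$. Pick $\varepsilon$ with $\Box_{\succ}(X',T)<\varepsilon<\delta/4$; note $\delta\leq1$, so $\varepsilon<1$. \Cref{thm:finite-realization} then produces $Z\in\mathscr C_T$ with $Z\prec X'$ and $\Box(T,Z)\leq\varepsilon\leq\delta/4$. Downward closure gives $Z\in\mathcal A'$, so $Z\in\mathcal A'\cap K$, a contradiction.

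The main obstacle is the choice of thresholds: the open set $U$ must be defined through $\Box_{\succ}$, not the Box distance, so that it is both open and compatible with \Cref{thm:finite-realization}, and the radius of $K$ must be large enough to catch the realized $Z$ yet small enough that $\mathcal A$ misses $K$. The margins $\delta/4$ versus $3\delta/4$ are chosen to meet both requirements, and \Cref{thm:finite-realization} only needs $\varepsilon<1$, which holds here.
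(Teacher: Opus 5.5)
Your proof is correct and follows essentially the same route as the paper. Both arguments take a witness pair $Y\prec X$ with $Y\notin\mathcal A$ and a finite $T$ near $Y$. Both then pair the compact-miss set $\mathscr C_T\cap\{Z\mid\Box(T,Z)\leq r\}$ with an open-hit set around $X$, use the realization in $\mathscr C_T$ to force every downward family hitting the open set to hit the compact set, and finish with the Fell-to-Wijsman inclusion.

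The differences are cosmetic. You take the open set to be the sublevel set $\{X'\mid\Box_{\succ}(X',T)<\delta/4\}$ rather than a Box ball around $X$. You also apply \Cref{thm:finite-realization} directly instead of passing through \Cref{prop:finite-coordinate} and \Cref{thm:compact-attainment}, which has the small bonus of not needing the competing family to be Box-closed.
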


\begin{proof}
We separate each $\mathcal A\in F_+(\mathcal X)\setminus\mathcal D$ from $\mathcal D$ by an ambient Fell-open neighborhood. Since $\mathcal A$ is not downward, choose $X\in\mathcal A$ and $Y\prec X$ with $Y\notin\mathcal A$. Closedness of $\mathcal A$ gives
\[
 \delta\coloneqq\Box(Y,\mathcal A)>0.
\]
Choose a finite mm-space $T$ such that $e\coloneqq\Box(T,Y)<\delta/3$. For each fixed $\mathcal A$, the map $T\mapsto\Box(T,\mathcal A)$ is $1$-Lipschitz for the Box distance, so
\[
 b\coloneqq\Box(T,\mathcal A)\geq\delta-e>e.
\]
Choose $r$ with $e<r<b$, and put
\[
 K\coloneqq\mathscr C_T\cap\{Z\in\mathcal X\mid\Box(T,Z)\leq r\},
 \qquad U\coloneqq\{Z\in\mathcal X\mid\Box(X,Z)<r-e\}.
\]
The set $K$ is compact. Since $\Box(T,\mathcal A)=b>r$, we have $\mathcal A\cap K=\varnothing$. Also $X\in\mathcal A\cap U$, so $\mathcal A$ belongs to the Fell-open set of families that hit $U$ and miss $K$.

Let $\mathcal B\in\mathcal D$ hit $U$, and choose $W\in\mathcal B\cap U$. Applying the oriented triangle inequality from \Cref{eq:succ-triangle} to $W,X,Y,T$, using $\Box_{\succ}(W,X)\leq\Box(W,X)$ from \Cref{eq:succ-box} and $\Box_{\succ}(X,Y)=0$ from $Y\prec X$ and \Cref{eq:succ-zero}, and using $\Box_{\succ}(Y,T)\leq\Box(Y,T)=e$, gives
\[
 \Box_{\succ}(W,T)
 \leq\Box_{\succ}(W,X)+\Box_{\succ}(X,Y)+\Box_{\succ}(Y,T)
 \leq\Box(W,X)+e<r.
\]
Because $\mathcal B$ is downward and contains $W$, it contains $\mathcal P_W$. By \Cref{prop:finite-coordinate},
\[
 \Box(T,\mathcal B)\leq\Box(T,\mathcal P_W)=\Box_{\succ}(W,T)<r.
\]
By \Cref{thm:compact-attainment}, applied to $\mathcal B$ and $T$, there is a point of $\mathcal B\cap\mathscr C_T$ at distance less than $r$ from $T$. Thus $\mathcal B\cap K\ne\varnothing$. No member of $\mathcal D$ can therefore hit $U$ and miss $K$, so the indicated Fell-open neighborhood separates $\mathcal A$ from $\mathcal D$.

Every ambient Fell-open set is open in the ambient Wijsman topology on $(\mathcal X,\Box)$, by the general Fell--Wijsman inclusion for metric spaces established above. Hence $\mathcal D$ is also closed in that Wijsman topology. This completes the proof.
\end{proof}

Let $\Lambda$ be a directed set, meaning that every finite set of indices has a common upper bound, and let $(A_\lambda)_{\lambda\in\Lambda}$ be a net in $F(\mathcal X)$. Define $\operatorname{Li}_{\lambda\in\Lambda}A_\lambda$ to consist of the points $x$ such that every neighborhood $U$ of $x$ is eventually hit by $A_\lambda$. Define $\operatorname{Ls}_{\lambda\in\Lambda}A_\lambda$ by requiring every neighborhood $U$ of $x$ to be hit cofinally.
Here “eventually” means that some $\lambda_0\in\Lambda$ satisfies $A_\lambda\cap U\ne\varnothing$ for all $\lambda\geq\lambda_0$. “Cofinally” means that for every $\lambda_0\in\Lambda$ there is $\lambda\geq\lambda_0$ with $A_\lambda\cap U\ne\varnothing$.

\begin{definition}[Painlev\'e--Kuratowski convergence of nets {\cite[Definition~8.1]{kazukawa-nakajima-shioya2024topological}}]\label{def:kp-convergence}
The net $(A_\lambda)_{\lambda\in\Lambda}$ Painlev\'e--Kuratowski converges to $A\in F(\mathcal X)$ if
\[
 \operatorname{Li}_{\lambda\in\Lambda}A_\lambda
 =\operatorname{Ls}_{\lambda\in\Lambda}A_\lambda=A.
\]
\end{definition}

\begin{corollary}\label{cor:downward-kp}
Let $(\mathcal A_\lambda)_{\lambda\in\Lambda}$ be a net in $\mathcal D$, and let $\mathcal A\in\mathcal D$. The following are equivalent: $\mathcal A_\lambda$ Painlev\'e--Kuratowski converges to $\mathcal A$; $\mathcal A_\lambda$ converges to $\mathcal A$ in the ambient Fell topology; it converges in the ambient Wijsman topology on $(\mathcal X,\Box)$; and $d_{\mathrm{fin}}(\mathcal A_\lambda,\mathcal A)\to0$.
\end{corollary}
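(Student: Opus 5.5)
By \Cref{thm:hyperspace-topologies}, convergence in the relative Fell topology, in the relative Wijsman topology, and in $d_{\mathrm{fin}}$ are all the same notion on $\mathcal D$. Since $\mathcal A_\lambda$ and $\mathcal A$ all lie in $\mathcal D$, net convergence in the ambient Fell or Wijsman topology is the same as net convergence in the corresponding relative topology. So three of the four conditions are equivalent at once, and it remains to link Painlev\'e--Kuratowski convergence with one of them. I will use the Fell and coordinate descriptions together.

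\textbf{Fell convergence implies Painlev\'e--Kuratowski convergence.} This holds for general nets of closed sets and is the routine part.

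First take $x\in\mathcal A$ and an open $U\ni x$. The hit set of $U$ is Fell-open and contains $\mathcal A$, so the net eventually hits $U$. Hence $\mathcal A\subset\operatorname{Li}$.

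Next take $x\notin\mathcal A$. Since $\mathcal A$ is closed, there is $\rho>0$ with $\Box(x,\mathcal A)>\rho$. I do not want to use a closed ball as the compact set, because Box balls are not compact. Instead I use the Wijsman coordinate at $x$, which is continuous for the same topology by \Cref{thm:hyperspace-topologies}. It gives $\Box(x,\mathcal A_\lambda)>\rho/2$ eventually. Therefore the open ball of radius $\rho/2$ about $x$ is eventually missed, and $x\notin\operatorname{Ls}$.

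Together with $\operatorname{Li}\subset\operatorname{Ls}$, this gives $\operatorname{Li}=\operatorname{Ls}=\mathcal A$.

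\textbf{Painlev\'e--Kuratowski convergence implies $d_{\mathrm{fin}}$ convergence.} By \Cref{prop:metric-topology}, it suffices to show $\Box(T,\mathcal A_\lambda)\to\Box(T,\mathcal A)$ for each finite $T$.

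For the upper bound, use \Cref{thm:compact-attainment} to pick $Z\in\mathcal A$ attaining $\Box(T,\mathcal A)$. Since $Z\in\operatorname{Li}$, every ball about $Z$ is eventually hit. This gives $\limsup_\lambda\Box(T,\mathcal A_\lambda)\le\Box(T,\mathcal A)$.

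For the lower bound, suppose it fails. Then there are $r<\Box(T,\mathcal A)$ and a cofinal set of indices with $\Box(T,\mathcal A_\lambda)<r$. By \Cref{thm:compact-attainment}, for each such $\lambda$ pick $Z_\lambda\in\mathcal A_\lambda\cap K_{T,r}$. The set $K_{T,r}$ is compact, so this subnet has a cluster point $Z\in K_{T,r}$. Every neighborhood of $Z$ is then hit cofinally, so $Z\in\operatorname{Ls}=\mathcal A$. But then $\Box(T,\mathcal A)\le\Box(T,Z)\le r$, a contradiction.

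The main obstacle is this last step, where one must work with nets rather than sequences. The cluster-point argument has to be phrased correctly for nets: a cluster point of $(Z_\lambda)$ along the cofinal index set yields cofinal hitting of each neighborhood of $Z$ with respect to the original directed set $\Lambda$. This needs care, but compactness of $K_{T,r}$ supplies a cluster point directly, so no extraction of subsequences is needed.
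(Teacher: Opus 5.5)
Your proof is correct. The first half (Fell $\Rightarrow$ Painlev\'e--Kuratowski) is essentially the paper's argument. Hit sets give $\mathcal A\subset\operatorname{Li}$, and the Wijsman coordinate at a point outside $\mathcal A$, continuous by \Cref{thm:hyperspace-topologies}, gives $\operatorname{Ls}\subset\mathcal A$. You also make explicit that ambient and relative net convergence agree for nets in $\mathcal D$ with limit in $\mathcal D$, which the paper leaves implicit.

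For the converse you take a different route. The paper proves Painlev\'e--Kuratowski $\Rightarrow$ Fell by a purely point-set argument that uses nothing about $\mathcal D$:
\begin{itemize}
\item open-hit conditions at points of $\mathcal A\subset\operatorname{Li}$ are eventually satisfied;
\item a compact $K$ disjoint from $\mathcal A=\operatorname{Ls}$ is covered by finitely many eventually-missed neighborhoods, and directedness of $\Lambda$ gives a common index.
\end{itemize}
All the mm-space content is then supplied by \Cref{thm:hyperspace-topologies}. You instead go directly to the finite coordinates. You re-use \Cref{thm:compact-attainment} and the compactness of $K_{T,r}$ through a cluster point along a cofinal index set, and then invoke \Cref{prop:metric-topology}. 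This is valid: your transfer of cofinal hitting back to $\Lambda$ is right, and attainment is not even needed for the upper bound. But it essentially repeats, for Painlev\'e--Kuratowski limits, the compact-miss argument already inside \Cref{thm:hyperspace-topologies}, so it is longer than necessary.

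One remark in your write-up is inaccurate. Fell convergence does \emph{not} imply Painlev\'e--Kuratowski convergence for general nets of closed sets in a non-locally-compact metric space. Take a point $p$ with no compact neighborhood and a net of points $q_\lambda\to p$ that eventually leaves every compact set. The net $\{a,q_\lambda\}$ then Fell-converges to $\{a\}$, while $p\in\operatorname{Li}$. It is the reverse implication that holds in general. Your argument is unaffected, because you use the Wijsman coordinate exactly at the step where the general claim would fail.
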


\begin{proof}
By \Cref{thm:hyperspace-topologies}, convergence in the relative Fell topology, the restriction of the Wijsman topology on $(\mathcal X,\Box)$, and the $d_{\mathrm{fin}}$ topology are equivalent. It remains to identify Fell convergence with Painlev\'e--Kuratowski convergence. Suppose first that $\mathcal A_\lambda$ converges in the relative Fell topology. Then its Box-distance coordinates converge by the Wijsman-topology equality. If $X\in\mathcal A$, the limit coordinate is zero, so every neighborhood of $X$ is eventually hit. If $X\notin\mathcal A$, closedness gives $\delta\coloneqq\Box(X,\mathcal A)>0$. Coordinate convergence implies that eventually $\Box(X,\mathcal A_\lambda)>\delta/2$, so the neighborhood $\{Z\in\mathcal X\mid\Box(X,Z)<\delta/2\}$ is eventually missed. Thus the lower and upper Painlev\'e--Kuratowski limits both equal $\mathcal A$.

Conversely, suppose these two limits equal $\mathcal A$. Every open-hit condition at a point of $\mathcal A$ is then eventually satisfied. Let $K\subset\mathcal X$ be compact and disjoint from $\mathcal A$. For each $x\in K$, membership $x\notin\operatorname{Ls}_{\lambda\in\Lambda}\mathcal A_\lambda$ gives a neighborhood $U_x$ of $x$ that is eventually missed. Choose a finite subcover of $K$ by such neighborhoods. Directedness of $\Lambda$ gives one index after which all sets $U_x$ in this finite subcover are missed, so $K$ is eventually missed. This proves Fell convergence. This completes the proof.
\end{proof}

Kazukawa, Nakajima, and Shioya define $\tau_K$ on $F(\mathcal X)$ as the finest topology whose convergence is weaker than Painlev\'e--Kuratowski convergence. In particular, every Painlev\'e--Kuratowski-convergent net is $\tau_K$-convergent to the same limit. They also show that $\tau_F\subset\tau_K$ \cite[Section~8]{kazukawa-nakajima-shioya2024topological}.

\begin{corollary}\label{cor:downward-tau-k-closed}
The subset $\mathcal D$ is closed in $F(\mathcal X)$ equipped with $\tau_K$.
\end{corollary}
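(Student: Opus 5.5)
We need to show that $\mathcal D$ is closed in $F(\mathcal X)$ equipped with $\tau_K$. Since $\tau_K$ contains the Fell topology $\tau_F$ (as recalled just above from \cite[Section~8]{kazukawa-nakajima-shioya2024topological}), it suffices to show that $\mathcal D$ is closed in $F(\mathcal X)$ for the full Fell topology, now including the empty set. Indeed, a $\tau_F$-closed set has $\tau_F$-open complement, which is then $\tau_K$-open.

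The argument splits the complement $F(\mathcal X)\setminus\mathcal D$ into two parts: the empty family and the nonempty closed families that are not downward. Members of $\mathcal D$ are nonempty by definition, so both parts lie outside $\mathcal D$.

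\textbf{Step 1: nonempty closed families that are not downward.} For $\mathcal A\in F_+(\mathcal X)\setminus\mathcal D$, I reuse the proof of \Cref{thm:downward-fell-closed} verbatim. That proof produces an open set $U$ and a compact set $K$ with the following properties: $\mathcal A$ hits $U$ and misses $K$, and no $\mathcal B\in\mathcal D$ both hits $U$ and misses $K$. The sets $\{\mathcal B\in F(\mathcal X)\mid\mathcal B\cap U\neq\varnothing\}$ and $\{\mathcal B\in F(\mathcal X)\mid\mathcal B\cap K=\varnothing\}$ are Fell-subbasic in all of $F(\mathcal X)$, so their intersection is a Fell-open neighborhood of $\mathcal A$ in $F(\mathcal X)$ disjoint from $\mathcal D$. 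The separating argument never used that competitors $\mathcal B$ were restricted to $F_+$, except that members of $\mathcal D$ are nonempty, which holds automatically.

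\textbf{Step 2: the empty family.} Let $K$ be the compact set consisting of the single one-point mm-space $\ast$. Every $\mathcal B\in\mathcal D$ is nonempty and downward. Since every mm-space dominates $\ast$ via a constant map, $\ast\in\mathcal B$, so $\mathcal B$ hits $K$. Meanwhile $\varnothing$ misses $K$. Hence the Fell-open set $\{\mathcal B\in F(\mathcal X)\mid\mathcal B\cap K=\varnothing\}$ contains $\varnothing$ and is disjoint from $\mathcal D$.

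Combining the two steps, $F(\mathcal X)\setminus\mathcal D$ is $\tau_F$-open, hence $\tau_K$-open. The only point needing care is confirming the inclusion $\tau_F\subset\tau_K$ as cited, and checking that it refers to the Fell topology on all of $F(\mathcal X)$. If instead only the net characterization were available, I would argue directly: a Painlev\'e--Kuratowski-convergent net in $\mathcal D$ has a limit in $\mathcal D$. By \Cref{cor:downward-kp}-style arguments, such a limit is Fell-convergent, and Fell-closedness then applies. However, $\tau_K$-closedness would require closure under $\tau_K$-convergence rather than just under Painlev\'e--Kuratowski convergence, so the inclusion route is the clean one.
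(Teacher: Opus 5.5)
Your proposal is correct and follows the paper's proof: \Cref{thm:downward-fell-closed} handles the nonempty non-downward families, the compact-miss set of $\{\ast\}$ separates $\varnothing$ from $\mathcal D$ because every nonempty downward family contains the one-point space, and the inclusion $\tau_F\subset\tau_K$ finishes. Your remark that the hit/miss sets from \Cref{thm:downward-fell-closed} are already Fell-open in all of $F(\mathcal X)$ makes the passage from $F_+(\mathcal X)$ to $F(\mathcal X)$ slightly more explicit than the paper's brief treatment.
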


\begin{proof}
By \Cref{thm:downward-fell-closed}, $\mathcal D$ is closed in $F_+(\mathcal X)$ for the relative Fell topology. It remains to consider the empty set. Let $*$ be the one-point mm-space. Every nonempty downward family contains $*$, so the compact-miss Fell-open set $\{\mathcal A\in F(\mathcal X)\mid\mathcal A\cap\{*\}=\varnothing\}$ is a neighborhood of $\varnothing$ disjoint from $\mathcal D$. Thus $\mathcal D$ is closed in the full Fell topology. Since $\tau_F\subset\tau_K$, it is closed in $\tau_K$.

This completes the proof.
\end{proof}

\begin{corollary}\label{cor:downward-tau-k-fell}
The subspace topology on $\mathcal D$ induced by $\tau_K$ coincides with the relative Fell topology.
\end{corollary}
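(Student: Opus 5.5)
We need to show that the subspace topology $\tau_K|_{\mathcal D}$ equals the relative Fell topology $\tau_F|_{\mathcal D}$. The plan is to prove the two inclusions separately.

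\emph{Fell is coarser.} Since $\tau_F\subset\tau_K$ on $F(\mathcal X)$ by the result of Kazukawa, Nakajima, and Shioya, restricting to $\mathcal D$ gives $\tau_F|_{\mathcal D}\subset\tau_K|_{\mathcal D}$ immediately.

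\emph{$\tau_K$ is coarser.} Here I use that $(\mathcal D,\tau_F|_{\mathcal D})$ is metrizable: by \Cref{thm:hyperspace-topologies} it coincides with the $d_{\mathrm{fin}}$ topology, so it is first countable and in particular sequential. I show that every $\tau_F|_{\mathcal D}$-closed set is $\tau_K|_{\mathcal D}$-closed.

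Let $\mathcal C\subset\mathcal D$ be relatively Fell-closed, and let $\mathcal A$ lie in the $\tau_K$-closure of $\mathcal C$ in $F(\mathcal X)$. Since $\mathcal D$ is $\tau_K$-closed by \Cref{cor:downward-tau-k-closed}, we have $\mathcal A\in\mathcal D$. It remains to show $\mathcal A\in\mathcal C$. The difficulty is that $\tau_K$ is defined only through convergence, so I need a usable description of its neighborhoods. My first attempt is the following. Suppose $\mathcal A\notin\mathcal C$. Then, by metrizability, there is $\eta>0$ such that the $d_{\mathrm{fin}}$-ball of radius $\eta$ about $\mathcal A$ misses $\mathcal C$. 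By the proof of \Cref{prop:metric-topology}, this ball contains a neighborhood cut out by finitely many coordinate conditions $|\Box(T_j,\mathcal B)-\Box(T_j,\mathcal A)|<\eta$. Each such condition is an intersection of conditions $\Box(T_j,\mathcal B)<r$ and $\Box(T_j,\mathcal B)>r'$. On $\mathcal D$, \Cref{thm:compact-attainment} turns these into open-hit and compact-miss conditions, as in \Cref{thm:hyperspace-topologies}. So the ball contains $\mathcal V\cap\mathcal D$ for some ambient Fell-open set $\mathcal V$. Since $\tau_F\subset\tau_K$, the set $\mathcal V$ is a $\tau_K$-open neighborhood of $\mathcal A$ disjoint from $\mathcal C$, contradicting $\mathcal A\in\overline{\mathcal C}^{\tau_K}$.

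This argument in fact avoids the convergence description of $\tau_K$ entirely. What it really shows is that every relatively Fell-open subset of $\mathcal D$ has the form $\mathcal V\cap\mathcal D$ with $\mathcal V$ Fell-open in $F(\mathcal X)$, which is true by the definition of the relative topology. Combined with $\tau_F\subset\tau_K$, this gives only one inclusion. The reverse inclusion, that a $\tau_K$-open set $\mathcal W$ satisfies $\mathcal W\cap\mathcal D\in\tau_F|_{\mathcal D}$, is the real content. I would prove it by sequences, using that $\tau_F|_{\mathcal D}$ is sequential.

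Suppose $\mathcal W\cap\mathcal D$ is not relatively Fell-open. Then some $\mathcal A\in\mathcal W\cap\mathcal D$ is the $d_{\mathrm{fin}}$-limit of a sequence $\mathcal A_n\in\mathcal D\setminus\mathcal W$. By \Cref{cor:downward-kp}, $\mathcal A_n$ Painlevé--Kuratowski converges to $\mathcal A$. By the defining property of $\tau_K$, it then $\tau_K$-converges to $\mathcal A$. Since $\mathcal W$ is a $\tau_K$-neighborhood of $\mathcal A$, $\mathcal A_n\in\mathcal W$ eventually, which is a contradiction.

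The main obstacle I expect is checking that ``$\tau_K$-convergence of PK-convergent nets'' legitimately applies to sequences, which are special nets, and that openness in $\tau_K$ entails eventual membership along $\tau_K$-convergent sequences. Both are standard topology facts, so they should go through.
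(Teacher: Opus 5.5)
Your argument is correct and is essentially the paper's proof. The inclusion $\tau_F|_{\mathcal D}\subset\tau_K|_{\mathcal D}$ comes from $\tau_F\subset\tau_K$, and the reverse inclusion comes from \Cref{cor:downward-kp} together with the defining property of $\tau_K$. The only difference is that you test openness along sequences, using metrizability of the relative Fell topology via \Cref{thm:hyperspace-topologies}, whereas the paper applies the net criterion directly; your discarded first attempt only reproves the easy inclusion and can be deleted.
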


\begin{proof}
Let $V$ be $\tau_K$-open in $\mathcal D$, and let a net in $\mathcal D$ converge in the relative Fell topology to $\mathcal A\in V$. By \Cref{cor:downward-kp}, it Painlev\'e--Kuratowski converges to $\mathcal A$. By Section~8 of \cite{kazukawa-nakajima-shioya2024topological}, it then converges in $\tau_K$, and hence is eventually in $V$. The net criterion for openness shows that $V$ is relatively Fell-open. Conversely, $\tau_F\subset\tau_K$ implies that every relatively Fell-open subset of $\mathcal D$ is $\tau_K$-open in $\mathcal D$. Thus the two subspace topologies coincide.

This completes the proof.
\end{proof}

For mm-spaces $X$ and $Y$, the pullback metric is their associated pyramids' distance:
\begin{equation}\label{eq:mm-pullback}
 d_{\mathrm{fin}}(X,Y)
 \coloneqq d_{\mathrm{fin}}(\mathcal P_X,\mathcal P_Y).
\end{equation}

\begin{proposition}\label{prop:associated-completion}
The pyramids associated with mm-spaces are dense in $(\Pi,d_{\mathrm{fin}})$ \cite[Lemma~7.14]{shioya2016mmg}. The map $X\mapsto\mathcal P_X$ is an isometric embedding of $(\mathcal X,d_{\mathrm{fin}})$ into $(\Pi,d_{\mathrm{fin}})$, and the latter space is its completion.
\end{proposition}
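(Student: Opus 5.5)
Three things have to be checked: the pullback $d_{\mathrm{fin}}$ is a genuine metric on $\mathcal X$, the map $X\mapsto\mathcal P_X$ has dense image in $(\Pi,d_{\mathrm{fin}})$, and $(\Pi,d_{\mathrm{fin}})$ is complete.

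\emph{Metric and isometry.} First, $\mathcal P_X$ is a pyramid. It is nonempty because it contains $X$, and it is downward by transitivity of $\prec$. It is directed because $X$ is a common upper bound of any two members. It is Box-closed: if $Y_n\prec X$ and $Y_n\to Y$ in the Box distance, then $X\succ_0 Y_n$, and the closedness result \cite[Theorem~1.5]{nakajima2024lipschitz}, applied with $\varepsilon_n=0$ and the constant sequence $X$, gives $X\succ_0 Y$; hence $Y\prec X$ by \eqref{eq:succ-zero}. Next, $X\mapsto\mathcal P_X$ is injective. If $\mathcal P_X=\mathcal P_Y$, then $X\prec Y$ and $Y\prec X$, so $X=Y$ by antisymmetry \cite[Proposition~2.11]{shioya2016mmg}. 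Since $d_{\mathrm{fin}}$ is a metric on $\mathcal D\supset\Pi$ by \Cref{prop:metric-topology}, the pullback \eqref{eq:mm-pullback} is a metric on $\mathcal X$. The map is then an isometric embedding by construction. \Cref{prop:finite-coordinate} also gives the explicit coordinates $\Box(T,\mathcal P_X)=\Box_{\succ}(X,T)$, though they are not needed here.

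\emph{Density.} \cite[Lemma~7.14]{shioya2016mmg} shows that every pyramid is a weak limit of a sequence $\mathcal P_{X_n}$. By \Cref{prop:metric-topology}, the weak topology on $\Pi$ coincides with the $d_{\mathrm{fin}}$-topology. Hence $d_{\mathrm{fin}}(\mathcal P_{X_n},\mathcal P)\to0$, and the image is dense. Concretely, one can take $X_n$ increasing in a directed exhausting sequence of $\mathcal P$, but we only need the cited statement.

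\emph{Completeness.} This is the step I expect to need the most care. I would use that $\Pi$ is compact in the weak topology \cite[Theorem~6.22]{shioya2016mmg}. By \Cref{prop:metric-topology}, this topology is the $d_{\mathrm{fin}}$-topology, so $(\Pi,d_{\mathrm{fin}})$ is a compact metric space. A compact metric space is complete: a Cauchy sequence has a convergent subsequence, and then the whole sequence converges to the same limit. Compactness is a property of the topology while completeness depends on the metric, and the argument is sound because compactness in the topology of a metric implies completeness for that same metric. A complete metric space containing an isometric dense copy of $(\mathcal X,d_{\mathrm{fin}})$ is, by uniqueness of completions up to isometry, the completion of $(\mathcal X,d_{\mathrm{fin}})$. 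This finishes the argument.
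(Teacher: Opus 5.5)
Your proposal is correct and takes essentially the same route as the paper: compactness of $\Pi$ in the weak topology together with \Cref{prop:metric-topology} gives compactness, and hence completeness, for $d_{\mathrm{fin}}$. The cited density lemma gives density, and antisymmetry of $\prec$ gives injectivity of $X\mapsto\mathcal P_X$; your extra check that $\mathcal P_X$ is a pyramid (Box-closedness via \cite[Theorem~1.5]{nakajima2024lipschitz}) is sound and is a detail the paper takes for granted.
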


\begin{proof}
Shioya's compactness theorem makes $\Pi$ compact in the weak topology \cite[Theorem~6.22]{shioya2016mmg}. By \Cref{prop:metric-topology}, this topology is induced by $d_{\mathrm{fin}}$, so $\Pi$ is compact and complete for that metric. Shioya's density theorem gives weak density of the pyramids associated with mm-spaces \cite[Lemma~7.14]{shioya2016mmg}; hence they are also dense for $d_{\mathrm{fin}}$.

The map is isometric by \eqref{eq:mm-pullback}. If $d_{\mathrm{fin}}(X,Y)=0$, then $\mathcal P_X=\mathcal P_Y$. Thus $X\prec Y$ and $Y\prec X$, and antisymmetry of the Lipschitz order gives $X=Y$. The map is an embedding, and its dense image in the complete space $\Pi$ proves the completion statement. This completes the proof.
\end{proof}

\begin{corollary}\label{cor:associated-concentration}
For mm-spaces $X_n$ and $X$,
\[
 d_{\mathrm{fin}}(X_n,X)\longrightarrow0
 \quad\Longleftrightarrow\quad
 d_{\mathrm{conc}}(X_n,X)\longrightarrow0
\]
as $n\to\infty$.
\end{corollary}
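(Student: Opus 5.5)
The plan is to reduce both sides to weak convergence of associated pyramids. Shioya's results then identify that with concentration of the spaces themselves. The equivalence is sequential, so this is enough.

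\textbf{Step 1.} By the definition \eqref{eq:mm-pullback}, $d_{\mathrm{fin}}(X_n,X)\to0$ means $d_{\mathrm{fin}}(\mathcal P_{X_n},\mathcal P_X)\to0$ in $\Pi$. Each $\mathcal P_{X_n}$ and $\mathcal P_X$ is a pyramid, since $\mathcal P_X$ is nonempty, Box-closed, downward and directed with top element $X$. So \Cref{prop:metric-topology}, which states that $d_{\mathrm{fin}}$ induces the weak topology on $\Pi$, applies. It gives that $d_{\mathrm{fin}}(X_n,X)\to0$ if and only if $\mathcal P_{X_n}$ converges weakly to $\mathcal P_X$. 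Since the weak topology is metrizable \cite[Theorem~6.22]{shioya2016mmg}, sequential convergence in the two descriptions agrees, with no net issue.

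\textbf{Step 2.} Invoke \cite[Proposition~6.13]{shioya2016mmg}: $X_n$ concentrates to $X$, that is $d_{\mathrm{conc}}(X_n,X)\to0$, if and only if $\mathcal P_{X_n}$ converges weakly to $\mathcal P_X$. Chaining this with Step~1 gives the corollary.

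The main point to check is that the cited proposition really is a two-sided equivalence stated for the observable distance $d_{\mathrm{conc}}$ as defined in the introduction. The introduction records the observable-distance description from \cite{nakajima2022coupling}, and this agrees with Shioya's definition, so no extra argument should be needed.

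If only one direction were available, I would close the gap as follows. Weak convergence $\mathcal P_{X_n}\to\mathcal P_X$ already forces concentration by Shioya's embedding theorem, \cite[Theorem~6.22]{shioya2016mmg} and the surrounding results: the map $X\mapsto\mathcal P_X$ is a topological embedding of $(\mathcal X,d_{\mathrm{conc}})$ into the weak pyramid space. This yields both directions at once. Alternatively, \Cref{thm:coordinates-weak}(3) together with \Cref{prop:finite-coordinate} expresses everything through the coordinates $\Box_{\succ}(X_n,T)$ over finite $T$. That route would lead into the forward estimate deferred to \Cref{sec:concentration}, which I would avoid here.
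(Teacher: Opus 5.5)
Your proposal is correct and matches the paper's proof: \Cref{prop:metric-topology} turns $d_{\mathrm{fin}}(\mathcal P_{X_n},\mathcal P_X)\to0$ into weak convergence of the associated pyramids, and \cite[Proposition~6.13]{shioya2016mmg} identifies that with $d_{\mathrm{conc}}(X_n,X)\to0$. The fallback routes in your last paragraph are not needed.
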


\begin{proof}
By \Cref{prop:metric-topology}, convergence on the left is equivalent to weak convergence of $\mathcal P_{X_n}$ to $\mathcal P_X$ as $n\to\infty$. By \cite[Proposition~6.13]{shioya2016mmg}, this weak convergence is equivalent to $d_{\mathrm{conc}}(X_n,X)\to0$ as $n\to\infty$. This completes the proof.
\end{proof}

\section{Quantitative comparison with concentration}\label{sec:concentration}

Let $X$ and $Y$ be source mm-spaces and $T$ a finite target mm-space. By \Cref{prop:finite-coordinate}, $\Box_{\succ}(X,T)=\Box(T,\mathcal P_X)$ and $\Box_{\succ}(Y,T)=\Box(T,\mathcal P_Y)$. Thus their absolute difference divided by $\#T+\diam T$ is the $T$-term in the supremum defining $d_{\mathrm{fin}}(X,Y)$. The realization in Section~\ref{sec:finite-metric} constructs one $1$-Lipschitz function on $X$ for each support point of $T$ from a coupling of $X$ and $T$ that nearly realizes $\Box_{\succ}(X,T)$. For any $h>d_{\mathrm{conc}}(X,Y)$, a single coupling of $X$ and $Y$ transfers all these functions simultaneously to $1$-Lipschitz functions on $Y$, each within $h$ outside a set of mass at most $h$. Gluing this coupling with the same $X$--$T$ coupling over their common $X$-marginal produces a measure on $X\times Y\times T$ with the chosen couplings as its marginals. The union bound adds at most $\#T h$ to the exceptional mass. For two retained triples, the transferred function values add at most $2h$ to the distance error. When $\#T\geq2$, this is at most $\#T h$; the singleton case is handled in the proof. These estimates control $|\Box_{\succ}(X,T)-\Box_{\succ}(Y,T)|$ by $\#T h$, up to the arbitrarily small error in choosing the near-optimal $X$--$T$ coupling. After division by $\#T+\diam T$, the $T$-term is therefore at most $h$ up to that arbitrarily small error. Taking the supremum over $T$ and letting $h\downarrow d_{\mathrm{conc}}(X,Y)$ yields $d_{\mathrm{fin}}(X,Y)\leq d_{\mathrm{conc}}(X,Y)$.

For a metric space $M$, write
\[
 \Lip_1(M)\coloneqq
 \{f\colon M\to\mathbb R\mid |f(x)-f(x')|\leq d_M(x,x')\text{ for all }x,x'\in M\}.
\]
For real-valued measurable functions $f,g$ on a probability space with measure $\mu$, identified when they agree $\mu$-almost everywhere, the \emph{Ky Fan metric} \cite[Definition~2.13]{nakajima2022coupling} is
\[
 d_{\mathrm{KF}}^\mu(f,g)
 \coloneqq
 \inf\{\varepsilon\geq0\mid \mu(\{|f-g|>\varepsilon\})\leq\varepsilon\}.
\]
We use the coupling representation of the observable distance from \cite[Theorem~5.2]{nakajima2022coupling}: $d_{\mathrm{conc}}(X,Y)$ is the infimum over $\pi\in\mathcal T(\mu_X,\mu_Y)$ of the Hausdorff distance, induced by $d_{\mathrm{KF}}^\pi$, between the pullbacks of $\Lip_1(X)$ and $\Lip_1(Y)$ to $X\times Y$.

\begin{proposition}\label{prop:forward-finite}
For every finite mm-space $T$,
\begin{equation}\label{eq:forward-finite}
 |\Box_{\succ}(X,T)-\Box_{\succ}(Y,T)|
 \leq(\#T)d_{\mathrm{conc}}(X,Y).
\end{equation}
Consequently,
\[
 d_{\mathrm{fin}}(X,Y)\leq d_{\mathrm{conc}}(X,Y).
\]
\end{proposition}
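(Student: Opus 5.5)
By symmetry it suffices to prove $\Box_{\succ}(Y,T)\leq\Box_{\succ}(X,T)+(\#T)h$ for every $h>d_{\mathrm{conc}}(X,Y)$. The second claim then follows. By \Cref{prop:finite-coordinate}, the difference in \eqref{eq:forward-finite} is exactly $|\Box(T,\mathcal P_X)-\Box(T,\mathcal P_Y)|$. Dividing by $\#T+\diam T\geq\#T$ bounds each $T$-term of $d_{\mathrm{fin}}(\mathcal P_X,\mathcal P_Y)$ by $d_{\mathrm{conc}}(X,Y)$; then take the supremum.

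\emph{Construction of the coupling.} Write $m=\#T$ and $D=\diam T$, and fix $\varepsilon$ with $\Box_{\succ}(X,T)<\varepsilon$. If $\varepsilon\geq1$, or $mh\geq1$, there is nothing to prove, since $\Box_{\succ}\leq\Box\leq1$. Otherwise, as in the proof of \Cref{thm:finite-realization}, choose $\pi\in\mathcal T(\mu_X,\mu_T)$ and a closed set $S$ with $\pi(S)>1-\varepsilon$. Build the $1$-Lipschitz functions $u_1,\dots,u_m$ of \eqref{eq:ui-definition}, which satisfy $d_T(t,t_i)-\varepsilon\leq u_i(x)\leq d_T(t,t_i)$ on $S$. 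By the coupling representation of $d_{\mathrm{conc}}$, pick $\sigma\in\mathcal T(\mu_X,\mu_Y)$ whose Ky Fan Hausdorff distance is $<h$. This gives $v_i\in\Lip_1(Y)$ with $\sigma(\{|u_i(x)-v_i(y)|>h\})\leq h$ for each $i$. Glue $\pi$ and $\sigma$ along $\mu_X$; this is possible by disintegration on Polish spaces. The result is a measure $\gamma$ on $X\times Y\times T$, and its $(Y,T)$-marginal $\rho$ lies in $\mathcal T(\mu_Y,\mu_T)$.

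\emph{The good set.} Let $E$ be the set of triples with $(x,t)\in S$ and $|u_i(x)-v_i(y)|\leq h$ for all $i$. The union bound gives $\gamma(E)\geq1-\varepsilon-mh$. Let $S'$ be the closure of the projection of $E$ to $Y\times T$; then $\rho(S')\geq\gamma(E)$. For two triples $(x,y,t),(x',y',t')\in E$, take the coordinate $i$ with $t_i=t'$. Then
\[
 d_T(t,t')\leq u_i(x)+\varepsilon\leq v_i(y)+h+\varepsilon\leq v_i(y')+d_Y(y,y')+h+\varepsilon .
\]
Since $v_i(y')\leq u_i(x')+h\leq d_T(t',t')+h=h$, this yields $d_T(t,t')\leq d_Y(y,y')+\varepsilon+2h$. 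By continuity the same bound holds on $S'$. Thus $Y\succ_{\varepsilon'}T$ with $\varepsilon'=\max\{\varepsilon+mh,\varepsilon+2h\}$. For $m\geq2$ this is $\varepsilon+mh$; letting $\varepsilon\downarrow\Box_{\succ}(X,T)$ gives the claim.

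\emph{The main obstacle} is the singleton case $m=1$, where $2h>mh$. In that case $T$ is a one-point space, so $d_T\equiv0$ and $\Box_{\succ}(Z,T)=0$ for every $Z$ (take $S=Z\times T$). Both sides of \eqref{eq:forward-finite} then vanish. The other technical point is the gluing step and measurability of $E$. $E$ is Borel because the $u_i$ and $v_i$ are continuous and $S$ is closed, and the gluing lemma for Polish spaces covers the rest.
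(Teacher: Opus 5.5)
Your proposal is correct and follows the paper's proof essentially step for step. You build the same realization functions $u_i$ and transfer them to $v_i$ through a near-optimal Ky Fan coupling, glue along $\mu_X$, use the union bound to get mass at least $1-\varepsilon-mh$ with one-sided distortion $\varepsilon+2h$, and treat the one-point case separately; the only cosmetic difference is that you get the distortion bound from the coordinate $i$ with $t_i=t'$ and $u_i(x')\leq d_T(t',t')=0$, whereas the paper uses the max-norm estimate $d_T(t,t')\leq\max_i|u_i(x)-u_i(x')|+\varepsilon$.
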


\begin{proof}
We first prove $\Box_{\succ}(Y,T)-\Box_{\succ}(X,T)\leq m\,d_{\mathrm{conc}}(X,Y)$. The case $\Box_{\succ}(X,T)=1$ follows from $\Box_{\succ}(Y,T)\leq1$. Otherwise, we transfer to $Y$ the selected source functions $u_1,\ldots,u_m$, one for each support point of $T$, using a common coupling of $X$ and $Y$.

If $\#T=1$, both one-sided errors vanish. Assume that $m\coloneqq\#T\geq2$, and fix $h>d_{\mathrm{conc}}(X,Y)$. By \cite[Theorem~5.2]{nakajima2022coupling}, choose $\pi^{XY}\in\mathcal T(\mu_X,\mu_Y)$ such that the Hausdorff Ky Fan distance between the two pulled-back $1$-Lipschitz function families is less than $h$.

Fix
\[
 \Box_{\succ}(X,T)<\varepsilon<1.
\]
In the proof of \Cref{thm:finite-realization}, choose $\pi^{XT}\in\mathcal T(\mu_X,\mu_T)$, a closed set $S\subset X\times T$, and functions $u_1,\ldots,u_m\in\Lip_1(X)$ such that $\pi^{XT}(S)>1-\varepsilon$ and
\begin{equation}\label{eq:forward-source}
 d_T(t,t')
 \leq\max_{1\leq i\leq m}|u_i(x)-u_i(x')|+\varepsilon
\end{equation}
for all $(x,t),(x',t')\in S$. For every $i$, choose $v_i\in\Lip_1(Y)$ such that
\[
 d_{\mathrm{KF}}^{\pi^{XY}}(u_i\circ\pr_1,v_i\circ\pr_2)<h,
\]
where $\pr_1$ and $\pr_2$ are the coordinate projections from $X\times Y$ to $X$ and $Y$, respectively.
The definition of the Ky Fan distance then gives
\[
 \pi^{XY}\bigl(\{(x,y)\mid |u_i(x)-v_i(y)|>h\}\bigr)\leq h.
\]

The spaces $X,Y,T$ are complete separable metric spaces, and $\pi^{XT}$ and $\pi^{XY}$ are Borel probability couplings with common $X$-marginal $\mu_X$. Applying the disintegration theorem \cite[Theorem~1.8(1)--(3)]{shioya2016mmg} to the projections $\pr_X\colon X\times T\to X$ and $\pr_X\colon X\times Y\to X$ gives conditional probability measures on $X\times T$ and $X\times Y$, respectively. By part (2), these measures are supported on the fibers $\{x\}\times T$ and $\{x\}\times Y$ for $\mu_X$-almost every $x$. Push them forward by $\pr_T$ and $\pr_Y$ to obtain $\kappa_x^T$ and $\kappa_x^Y$. For each $x$, these are probability measures on $T$ and $Y$, respectively, and $x\mapsto\kappa_x^T(C)$ and $x\mapsto\kappa_x^Y(B)$ are Borel for every Borel $C\subset T$ and $B\subset Y$. Part (3) gives, for every Borel $A\subset X$, $B\subset Y$, and $C\subset T$,
\[
 \pi^{XT}(A\times C)=\int_A\kappa_x^T(C)\,d\mu_X(x),\qquad
 \pi^{XY}(A\times B)=\int_A\kappa_x^Y(B)\,d\mu_X(x).
\]
Thus
\[
 \pi^{XT}(dx,dt)=\mu_X(dx)\kappa_x^T(dt),\qquad
 \pi^{XY}(dx,dy)=\mu_X(dx)\kappa_x^Y(dy).
\]
Before applying the $\pi$--$\lambda$ theorem, recall that a $\pi$-system is a family of sets closed under finite intersections. A Dynkin system containing a $\pi$-system that generates a $\sigma$-algebra contains that generated $\sigma$-algebra.

For each $x$, put $\lambda_x\coloneqq\kappa_x^Y\otimes\kappa_x^T$, a probability measure on $Y\times T$. For Borel $B\subset Y$ and $C\subset T$,
\[
 \lambda_x(B\times C)=\kappa_x^Y(B)\kappa_x^T(C),
\]
which is Borel measurable in $x$. Define
\[
 \mathscr L_1\coloneqq\{F\subset Y\times T\mid F\text{ is Borel and }x\mapsto\lambda_x(F)\text{ is Borel measurable}\}.
\]
This is a Dynkin system. It contains $Y\times T$ because each $\lambda_x$ is a probability measure. If $F\in\mathscr L_1$, then $\lambda_x((Y\times T)\setminus F)=1-\lambda_x(F)$, so the complement also belongs to $\mathscr L_1$. If $(F_j)_{j\geq1}$ is a pairwise disjoint sequence in $\mathscr L_1$, then countable additivity and monotone convergence give
\[
 \lambda_x\left(\bigcup_{j\geq1}F_j\right)=\lim_{n\to\infty}\sum_{j=1}^n\lambda_x(F_j),
\]
which is Borel measurable as a pointwise limit of Borel functions. Thus $\mathscr L_1$ is closed under countable disjoint unions. The measurable rectangles $B\times C$ form a $\pi$-system that generates the product Borel $\sigma$-algebra, since $Y$ and $T$ are separable metric spaces. The rectangle identity shows that each such rectangle belongs to $\mathscr L_1$. The $\pi$-$\lambda$ theorem now shows that $x\mapsto\lambda_x(F)$ is Borel measurable for every Borel $F\subset Y\times T$.

For a Borel set $D\subset X\times Y\times T$, its section
\[
D_x\coloneqq\{(y,t)\in Y\times T\mid(x,y,t)\in D\}
\]
is Borel. Define
\[
 \mathscr L_2\coloneqq\{D\subset X\times Y\times T\mid D\text{ is Borel and }x\mapsto\lambda_x(D_x)\text{ is Borel measurable}\}.
\]
This is a Dynkin system. It contains $X\times Y\times T$ because $\lambda_x(Y\times T)=1$. For $D\in\mathscr L_2$, the section of its complement is $(Y\times T)\setminus D_x$, so $\lambda_x((X\times Y\times T\setminus D)_x)=1-\lambda_x(D_x)$, and the complement belongs to $\mathscr L_2$. For a pairwise disjoint sequence $(D_j)_{j\geq1}$ in $\mathscr L_2$, the sections $(D_j)_x$ are pairwise disjoint and
\[
 \lambda_x\left(\left(\bigcup_{j\geq1}D_j\right)_x\right)=\lim_{n\to\infty}\sum_{j=1}^n\lambda_x((D_j)_x)
\]
by countable additivity and monotone convergence. This is a pointwise limit of Borel functions, so $\mathscr L_2$ is closed under countable disjoint unions. For a rectangle $D=A\times F$ with Borel $A\subset X$ and $F\subset Y\times T$, the function $x\mapsto\lambda_x(D_x)$ equals $\lambda_x(F)$ on $A$ and $0$ outside $A$. It is Borel measurable by the first extension. The rectangles $A\times F$ form a generating $\pi$-system for the product Borel $\sigma$-algebra. The $\pi$-$\lambda$ theorem therefore shows that $x\mapsto\lambda_x(D_x)$ is Borel measurable for every Borel $D\subset X\times Y\times T$. Define the measure on the full product Borel $\sigma$-algebra by
\[
 \Gamma(D)\coloneqq\int_X\lambda_x(D_x)\,d\mu_X(x).
\]
Countable additivity follows from monotone convergence, and $\Gamma(X\times Y\times T)=1$ because each $\lambda_x$ is a probability measure. In kernel notation this is $\Gamma(dx,dy,dt)=\mu_X(dx)\kappa_x^Y(dy)\kappa_x^T(dt)$. For Borel sets $A\subset X$, $B\subset Y$, and $C\subset T$, its rectangle formula is
\[
 \Gamma(A\times B\times C)=\int_A\kappa_x^Y(B)\kappa_x^T(C)\,d\mu_X(x).
\]
Integrating out the other kernel gives
\[
 \begin{aligned}
 \Gamma_{X,T}(dx,dt)&=\mu_X(dx)\kappa_x^T(dt)=\pi^{XT}(dx,dt),\\
 \Gamma_{X,Y}(dx,dy)&=\mu_X(dx)\kappa_x^Y(dy)=\pi^{XY}(dx,dy).
 \end{aligned}
\]

Let $E$ be the intersection of the inverse image of $S$ with the $m$ conditions $|u_i(x)-v_i(y)|\leq h$. The union bound gives
\[
 \begin{aligned}
 \Gamma(E^c)&\leq\Gamma(\{(x,y,t)\mid(x,t)\notin S\})+\sum_{i=1}^m\Gamma(\{(x,y,t)\mid|u_i(x)-v_i(y)|>h\})\\
 &<\varepsilon+mh,\qquad\text{so}\qquad \Gamma(E)>1-\varepsilon-mh.
 \end{aligned}
\]
For two points $(x,y,t),(x',y',t')\in E$ and each $i$,
\[
 \begin{aligned}
 |u_i(x)-u_i(x')|&\leq|u_i(x)-v_i(y)|+|v_i(y)-v_i(y')|+|v_i(y')-u_i(x')|\\
 &\leq|v_i(y)-v_i(y')|+2h.
 \end{aligned}
\]
Therefore, \eqref{eq:forward-source} gives
\[
 d_T(t,t')\leq\max_i|v_i(y)-v_i(y')|+\varepsilon+2h\leq d_Y(y,y')+\varepsilon+2h.
\]
Let $R$ be the image of $E$ under the projection $(x,y,t)\mapsto(y,t)$, and let $G$ be its closure in $Y\times T$. The $Y\times T$-marginal of $\Gamma$ assigns mass at least $\Gamma(E)$ to $G$. Choosing preimages in $E$ shows that $d_T(t,t')-d_Y(y,y')\leq\varepsilon+2h$ for every two points $(y,t),(y',t')\in R$. Continuity of the metrics gives the same bound on $G$, so $\dis_{\succ}G\leq\varepsilon+2h$. Since $m\geq2$,
\[
 \Box_{\succ}(Y,T)
 \leq\max\{\varepsilon+2h,\varepsilon+mh\}
 =\varepsilon+mh.
\]
Letting first $\varepsilon\downarrow\Box_{\succ}(X,T)$ and then $h\downarrow d_{\mathrm{conc}}(X,Y)$ yields
\[
 \Box_{\succ}(Y,T)-\Box_{\succ}(X,T)
 \leq m d_{\mathrm{conc}}(X,Y).
\]
Interchanging $X$ and $Y$ proves \eqref{eq:forward-finite}.

By \eqref{eq:associated-coordinate} and \eqref{eq:mm-pullback},
\[
 \begin{aligned}
 d_{\mathrm{fin}}(X,Y)&=\sup_{T\text{ finite}}\frac{|\Box_{\succ}(X,T)-\Box_{\succ}(Y,T)|}{\#T+\diam T}\\
 &\leq\sup_{T\text{ finite}}\frac{\#T\,d_{\mathrm{conc}}(X,Y)}{\#T+\diam T}\leq d_{\mathrm{conc}}(X,Y).
 \end{aligned}
\]
This completes the proof.
\end{proof}

\begin{remark}\label{rem:no-uniform-converse}
There is no single function $\omega\colon[0,1]\to[0,\infty)$ that works simultaneously for all mm-space pairs, satisfies $\omega(t)\to0$ as $t\downarrow0$, and obeys
\[
 d_{\mathrm{conc}}(X,Y)\leq\omega\bigl(d_{\mathrm{fin}}(X,Y)\bigr)
\]
for all mm-spaces $X$ and $Y$. For $n\geq1$, let $X_n\coloneqq S^n(\sqrt n)$ be the $n$-sphere of radius $\sqrt n$ with the Euclidean distance induced from $\mathbb R^{n+1}$ and normalized Riemannian volume measure. The pyramids associated with these spheres converge weakly by \cite[Theorem~7.40]{shioya2016mmg}, and the sequence has no $d_{\mathrm{conc}}$-Cauchy subsequence by \cite[Remark~6.26]{shioya2016mmg}. Since the present metric induces the weak topology of pyramids, $d_{\mathrm{fin}}(X_n,X_m)\to0$ as $n,m\to\infty$, and any such $\omega$ would imply $d_{\mathrm{conc}}(X_n,X_m)\to0$, a contradiction.
The fixed-target equivalence in \Cref{cor:associated-concentration} concerns convergence to one fixed mm-space, whereas this inequality would have to hold uniformly for all pairs.\end{remark}

\section*{Acknowledgments}
The author used OpenAI models in preparing this manuscript. The author reviewed and revised the mathematical content and takes full responsibility for the final manuscript.

\section*{Statements and Declarations}
\subsection*{Funding}
The author was supported by the Japan Society for the Promotion of Science (JSPS) KAKENHI Grant Number JP24K00523.

\end{document}